\titleformat*{\section}{\LARGE\bfseries\rmfamily}
\titleformat*{\subsection}{\Large\bfseries\rmfamily}
\titleformat*{\subsubsection}{\large\bfseries\rmfamily}
\titleformat*{\paragraph}{\large\bfseries\rmfamily}
\newtheorem{theorem}{Theorem}[section]
\newtheorem{lemma}[theorem]{Lemma}
\newtheorem{observation}[theorem]{Observation}
\newtheorem{prop}[theorem]{Proposition}
\theoremstyle{definition}
\newtheorem{defn}{Definition}[section]
\newtheorem*{problem}{Problem}
\theoremstyle{remark}
\newtheorem{rem}[defn]{Remark}
\newcommand{\al}[1]{\begin{align*}#1\end{align*}}
\newcommand{\XSays}[3]{{\color{#2}
      {$\rule[-0.12cm]{0.2in}{0.5cm}$\fbox{\tt
            #1:} }%
      \itshape #3
      \marginpar{\color{#2}\tt #1}%
      \def\comment{#3}\def\empty{}\ifx\comment\empty\else
      {$\rule[0.1cm]{0.3in}{0.1cm}$\fbox{\tt
            end}$\rule[0.1cm]{0.3in}{0.1cm}$} \fi
   }%
}
\title{\huge{\textbf{ The Saturation Time of Graph Bootstrap Percolation}}}
\author{Kilian Matzke\\Technische Universit\"at M\"unchen}
\begin{document}
\maketitle

\thispagestyle{plain}

\begin{abstract}

\begin{center}
	\textbf{Abstract}
\end{center}
The process of $H$-bootstrap percolation for a graph $H$ is a cellular automaton, where, given a subset of the edges of $K_n$ as initial set, an edge is added at time $t$ if it is the only missing edge in a copy of $H$ in the graph obtained through this process at time $t-1$. We discuss an extremal question about the time of $K_r$-bootstrap percolation, namely determining maximal times for an $n$-vertex graph before the process stops. We determine exact values for $r=4$ and find a lower bound for the asymptotics for $r \geq 5$ by giving an explicit construction.
\end{abstract}

\section{Introduction}

Before focusing on the model which is called graph bootstrap percolation, we give some background for the field of bootstrap percolation and introduce some other models as well. This will serve as a motivation for the questions that will be dealt with later on.

As partially mentioned in \cite{adler}, the models examined are able to describe systems ranging from magnetic materials, fluid flow in rocks, computer storage systems or the spread of rumors. Hence, the problem has been studied by physicists, computer scientists (for both, see \cite{adler} and the references therein) and also by sociologists (see \cite{watts}, for example).

All models presented are certain types of cellular automata, which themselves date back to Ulam \cite{ulam} and von Neumann \cite{vonNeumann}. The term bootstrap percolation was first coined by Chalupa, Leath and Reich in \cite{chalupa}, even though the idea of the model had been stated before already.

The general setting is about an infection process of a graph's vertices. More precisely, given a graph $G$ and a subset $A=A_0 \subseteq V(G)$ of its vertices, we call $A_0$ the \emph{initially infected} set. The bootstrap percolation process is a deterministic sequence of steps, where in every step $t$, according to some update rule (which varies from model to model), a subset of uninfected vertices becomes infected to obtain $A_t \supseteq A_{t-1}$.

Among the questions of interest are those about the size of $\langle A \rangle := \bigcup_t A_t$, the set of finally infected vertices. In particular, say that $A$ \emph{percolates} if the finally infected vertices are $V(G)$. Other questions which are often among the ones studied first (and interestingly enough, are not uncommonly of use for other, probabilistic questions) are extremal ones, for example asking for a smallest percolating set or a minimal percolating set, or considering minimal or maximal times until an initial set percolates.

\paragraph{Neighbor bootstrap percolation} The most common model of bootstrap percolation is called \emph{$r$-neighbor bootstrap percolation}, and the update rule is
\al{A_{t+1} := A_t \cup \{v \in V(G): |N(v) \cap A_t | \geq r \}.}
In words, a vertex is activated if at least $r$ of its neighbors are active. This is (more or less) also how Chalupa et al. proposed their model. Additionally, they chose the set $A_0$ in a manner which is very typical. That is, $A_0$ is obtained randomly: Every vertex in $G$ is initially infected with some probability $p$ and independently of all other vertices. In doing so, we easily see that the probability of percolation is increasing in $p$, and we can define the \emph{critical percolation probability} as
\al{p_c(G,r) := \inf\{p: \mathbb P_p[\langle A \rangle = V(G)] \geq 1/2 \} .}

The critical percolation probability has been extensively studied, maybe in most detail on the graph $[n]^d$, the $d$-dimensional grid of length $n$. Before briefly sketching the history of results about $[n]^d$, let us first note that considering the infinite graph $\mathbb Z^d$ w.r.t. this parameter is not interesting. As proven by van Enter in \cite{enterz2} and Schonmann in \cite{schonmannz2}, we have that $p_c(\mathbb Z^d,r)=1$ for $r \geq d+1$ while it is $0$ for $r \leq d$. In other words, depending on how we choose $r$, percolation almost surely occurs or does not occur. Turning back to $[n]^d$,  Aizenman and Lebowitz first proved in \cite{aizenman} in 1988 that

\al{p_c([n]^d,2) = \Theta \left( \left(\frac{1}{\log n} \right)^{d-1} \right)}
as $n \to \infty$. With Cerf and Cirillo proving the case $r=3=d$ \cite{cerf1999} in 1998, it was not until 2002 that Cerf and Manzo generalized this result in \cite{cerf} to all values of $r$:

\al{p_c([n]^d,r) = \Theta \left(\left(\frac{1}{\log_{(r-1)} (n)} \right)^{d-r+1}\right),}
where $\log_{(r)}$ is the $r$ times iterated logarithm, i.e. $\log_{(r+1)}(n) = \log( \log_{(r)} (n))$. A breakthrough which happened around the same time was achieved by Holroyd in \cite{holroyd}, who determined the precise asmyptotic value in the simplest case $r=d=2$ to be

\al{p_c([n]^2,2) = \frac{\pi^2}{18 \log n} + o\left(\frac{1}{\log n} \right).}

The second term was later made more precise as $-(\log n)^{-3/2 + o(1)}$ by Gravner et al. in \cite{holroydsharper}. In his paper, Holroyd introduced a function $\lambda(d,r)$ with $\lambda(2,2) = \pi^2/18$, which we shall not define properly here. Following his ideas, Balogh et al. first proved an analogue statement in the three-dimensional case \cite{baloghthreedim} and later obtained in \cite{baloghalldim} that

\al{p_c([n]^d,r) = \left( \frac{\lambda(d,r) + o(1)}{\log_{(r-1)} (n)} \right)^{d-r+1},}
proving a sharp threshold in all dimensions.

Certainly, this is not the only class of graphs studied. The case of high dimension (where $d \gg \log n$) was studied in \cite{baloghhighdim}, and the hypercube was considered in \cite{baloghhypercube} and \cite{hypercubemajority}. Furthermore, trees were studied in \cite{baloghinftrees, biskupregulartrees, galtonwatson, fonteshomotrees}. Turning to other graphs, Janson et al. discussed bootstrap percolation on the Erd\H{o}s-R\'enyi random graph $G_{n,p}$ in \cite{bprandomgraph} in great detail, Balogh and Pittel considered random regular graphs \cite{baloghrandomregular}, whereas Amini and Fountoulakis started applying the model to power-law random graphs in \cite{aminipowerlaw}.

One can also ask for percolation by a given time $t$.  Bollob\'{a}s et al. considered this scenario in \cite{bollobastimedense} for $d$-neighbor bootstrap percolation on the $d$-dimensional discrete torus $\mathbb T^d_n$.

Apart from the critical percolation probabilities, many extremal questions are of interest. Arguably the most natural one is to ask for the minimum cardinality of a percolating set. This has, for example, been studied by Morrison and Noel in \cite{mornoel}. Also of interest are large minimal percolating sets (i.e., percolating sets such that no subset percolates)---see, for example, Morris \cite{morris07}.

Slow percolating sets have also been studied, for exmaple by Przykucki, who considered the hypercube \cite{przy12}, whereas Benevides and Przykucki \cite{benevmaxtime2d} proved that the maximum time 2-neighbor bootstrap percolation can take on $[n]^2$ is $13/18 n^2 + \mathcal O(n)$, and that this bound is sharp. 

\paragraph{Graph bootstrap percolation}

Let us now turn to other models besides $r$-neighbor bootstrap percolation and head towards the model we are interested in, which we introduce in Definition~\ref{def:gbp}. First, we want to define a more general model though, which allows us to encode a variety of other models and their update rules within the input---especially, this is true for our model of interest. It is not crucial for this paper though, and is listed for the sake of completeness. Said model is called the $\mathcal H$-bootstrap process, where $\mathcal H$ is a fixed hypergraph.

We are given a set $A \subset V(\mathcal H)$ of initially infected vertices. The \emph{$\mathcal H$-bootstrap process} is a sequence $A=A_0, A_1  \ldots$ of sets with $A_t$ corresponding to an infected set at time $t$, in which an uninfected vertex is added to $A_t$ if it is the only uninfected vertex in an edge of $\mathcal H$ at time $t-1$. More precisely,
\al{A_{t+1} := A_t \cup \left\{u \in V(\mathcal H): \exists S \in E(\mathcal H) \textrm{ with } S \backslash A_t = \{u\} \right\}.}
Let $\langle A \rangle_{\mathcal H} := \bigcup_{t \geq 0} A_t$ and say that $A$ \emph{percolates} if $\langle A \rangle_{\mathcal H} = V(\mathcal H)$.

The $\mathcal H$-bootstrap process was introduced in \cite{baloghlinalg}, where extremal questions were discussed---more precisely, minimal percolating sets for hypergraphs which can be regarded as a generalization of a hypercube were presented. Even though not much has been proven about this process and there are still a lot of unanswered questions, it encodes a large class of models. Among them is the one we are concerned with in this paper. It is defined as follows.

\begin{defn} \label{def:gbp}
Given a graph $H$, define \emph{$H$-bootstrap percolation} as follows: Starting with a set of initially activated edges $G\subset E(K_n)$ on the vertex set $[n]$, set $G_0$ to be $G$ and define, for every non-negative integer $t$:
\begin{align*}
G_{t+1} := G_t \cup \{ e \in E(K_n) \mid \exists H \text{ with } e \in H \subset G_t \cup\{e\}\}.
\end{align*}
Call $\langle G \rangle _H := \bigcup_t G_t$ the \emph{closure} of $G$ under the $H$-bootstrap percolation process, and say $G$ \emph{percolates} (w.r.t. to the $H$-bootstrap percolation process) if $\langle G \rangle _H = E(K_n)$.
\end{defn}

This model was introduced quite some time before the $\mathcal H$-bootstrap process, namely in 1968 by Bollob\'as \cite{bol68} as weak $H$-saturation, where the focus was on extremal questions. One such question was the conjecture Bollob\'{a}s made, saying that any graph percolating in the $K_r$-process must have at least $\binom{n}{2} - \binom{n-r+2}{2}$ edges. It was solved by Alon in \cite{alon}, Kalai \cite{kalai84} and Frankl \cite{frankl82}. One of the approaches was to use linear algebraic methods. Note that Morrison et al. \cite{scottmorno} considered the hypercube under the same questions. 

Before continuing, we make a remark about terminology. We interchangeably talk about edges being added, infected or activated. The process is always one where $K_n$ is the underlying graph from which newly infected edges are drawn, and this is important to note, as one might think of models where the underlying graph is another one (a possible choice could be a random graph). So we should always be talking about the percolation process on $K_n$ with some graph $G \subset E(K_n)$ as starting configuration. As throughout this paper, we stick to $K_n$, we shall often refer to the percolation process on $G$, where $G$ is the starting configuration for the process on $K_n$.

Furthermore, when talking about a process called $K_r$-bootstrap percolation, we should appropriately refer to it as the $K_r$-bootstrap percolation process. Instead, as it will not create any confusion, we shall abbreviate this and refer to it as, for example, the $K_r$-bootstrap process, $K_r$-percolation process, the $K_r$-process, the percolation process or $K_r$-percolation.

Directing our attention back to the models just introduced, it is easy to see how these two processes relate. Letting $G$ and $H$ be two graphs and $\mathcal H$ a hypergraph, we set $V(\mathcal H) = E(G)$ and let the edges of $\mathcal H$ be the subsets $H' \subset V(\mathcal H)$ such that $H' \simeq H$. Now, fixing $G$ to be $K_n$, we obtain $H$-bootstrap percolation. Note that there is also a vertex version of this process, where $V(\mathcal H) = V(G)$ and the edges are defined in the same way.

\vspace{0.5 cm}
\begin{figure}[H]
\centering
\includegraphics{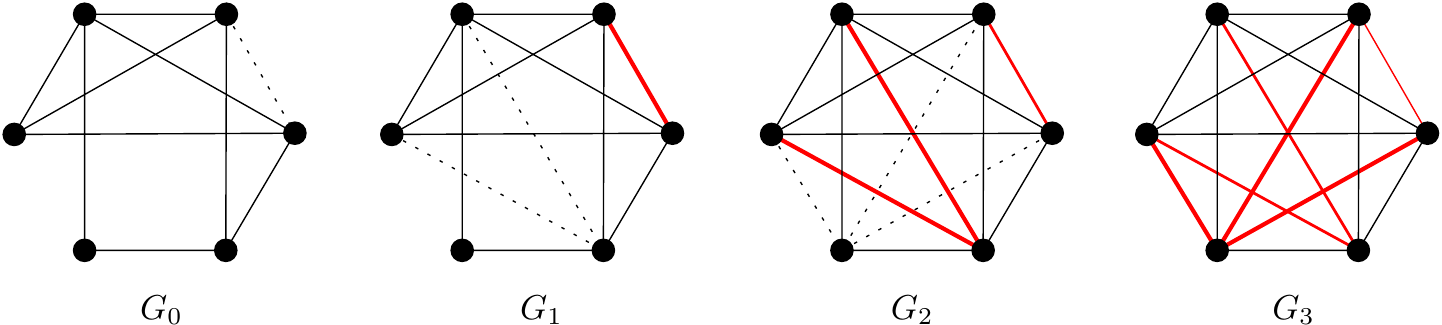}
\caption{An example of the $K_4$-bootstrap percolation process (copied from \cite{gbptime}) which percolates after $3$ time steps. Dotted edges are those added to $G_t$ at time $t+1$.}
\label{fig:k4percexample}
\end{figure}

If we introduce randomness to the process of $H$-bootstrap percolation in a similar way as for the $r$-neighbor model, we end up with the Erd\H{o}s-R\'enyi random graph $G_{n,p}$ as underlying starting configuration. The interesting question now is the one asking for a phase transition and thus motivates us to find a threshold function $p(n)$. The following definition suggests itself.

\begin{defn}
The \emph{critical threshold} for $H$-bootstrap percolation on $K_n$ is defined to be
\begin{align*}
p_c(n,H) := \inf \{p \mid \mathbb P_p [\langle G_{n,p} \rangle _H = K_n] \geq 1/2 \}.
\end{align*}
\end{defn}

In \cite{gbp1}, Balogh et al. examined the critical threshold for $H=K_r$. They were able to determine the precise asymptotic in the case $r=4$ and determine it up to a logarithmic factor for general $r$. If we define
\al{\lambda(r) := \frac{\binom{r}{2}-2}{r-2},}
then they obtained that 
\al{\frac{n^{-1/\lambda(r)}}{c \log n} \leq p_c(n,K_r) \leq n^{-1/\lambda(r)} \log n}
for some constant $c=c(r)$ and $n$ sufficiently large. For $r=4$, the stronger bounds
\al {\frac{1}{4} \sqrt{\frac{1}{n \log n}} \leq p_c(n,K_4) \leq 24 \sqrt{\frac{1}{n \log n}}}
hold true.

If we again ask for percolation by some given time, then Gunderson et al. in \cite{gbptime} give answers for times $1 \leq t \leq c \log \log n$ and the $K_r$-bootstrap percolation process. 

\paragraph{Results}
In \cite{gbptime}, the following questionis asked: Find graphs, minimal in number of vertices and minimal in number of edges, that add a given edge $e_0$ at time $t$ in the graph bootstrap process. To that end, we make the following definition.

\begin{defn}
Let $\tau(G)=\tau_r(G) := \min \{ t \geq 0 \mid  \langle G \rangle_t = \langle G \rangle_{K_r} \}$ be the \emph{saturation time} of graph $G$ w.r.t. the $K_r$-percolation process. The \emph{maximum saturation time} is then $\tau_{\max}(n) = \tau_{\max}(n,r) := \max_{v(G)=n} \tau_r(G)$.
\end{defn}

We want to emphasize the fact that in order to answer above question, we can consider either of the two equivalent extremal problems.
\begin{itemize}
\item Minimize the number of vertices over the set of all graphs of saturation time $t$.
\item Maximize the saturation time over the set of all $n$-vertex graphs.
\end{itemize}
We shall use this dualism and turn to both formulations of the underlying problem, depending on the situation. Let us remark that as is conventional in combinatorics, an optimal goal would be to determine precise asymptotics of the maximal saturation time as a function of $n$.

It turns out that the asymptotics of $\tau_{\max}(n)$ depend on $r$. Indeed, we have the following.

\begin{theorem}\label{summarythm}
In the $K_r$-bootstrap process, it holds that
\begin{itemize}
\item $\tau_{\max}(n,3) = \lceil \log_2 (n-1) \rceil$,
\item $\tau_{\max}(n,4) = n-3$,
\item $\tau_{\max}(n,r) = \Omega(n^{3/2})$ if $r \geq 5$ and $n \to \infty$.
\end{itemize}
\end{theorem}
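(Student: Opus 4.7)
I would treat the three cases separately, since the three regimes call for quite different techniques: a clean distance-halving analysis for $r=3$, a matching construction and potential argument for $r=4$, and a purely constructive lower bound for $r\ge 5$.

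For $r=3$, the update rule simply says that $uv$ becomes an edge once $u$ and $v$ share a common neighbor, so an easy induction shows that $uv\in G_t$ iff $d_G(u,v)\le 2^t$. Thus the closure consists of a clique on every connected component of $G$ and $\tau_3(G)=\lceil\log_2 \operatorname{diam}(G)\rceil$, where the diameter is taken over non-trivial components. The upper bound is then immediate from $\operatorname{diam}(G)\le n-1$, while the lower bound is witnessed by taking $G$ to be the path $P_n$, whose diameter is exactly $n-1$.

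For $r=4$ both bounds are substantive. For the lower bound I would exhibit an $n$-vertex graph with saturation time exactly $n-3$; the natural candidate is a long ladder of overlapping triangles with two extra initial edges chosen so that exactly one ``rung'' of the ladder is completed per time step, and the required activation times follow by a direct induction. For the upper bound, the plan is a vertex-charging argument: whenever a new edge $uv$ appears at time $t\ge 1$, the pair of auxiliary vertices $w,x$ that close a $K_4$ for $uv$ supply a witness vertex that has not been charged before. Showing that this charging is injective and uses up at most $n-3$ vertices in total gives the matching upper bound. The additive constant will have to be tightened by treating the first and last few steps by hand.

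For $r\ge 5$ only a lower bound is claimed, and it is established by an explicit construction. The target $\Omega(n^{3/2})$ strongly suggests a two-scale structure: partition the vertex set into about $\sqrt n$ blocks of size $\sqrt n$, equip each block with a slow-propagating gadget built from overlapping copies of $K_{r-1}$ (exploiting the extra room available when $r\ge 5$), and then link the blocks so that the $i$-th gadget cannot begin to percolate until the $(i-1)$-st is essentially finished. A careful inductive verification that the process activates no more than a constant number of new edges per time step across $\Omega(n^{3/2})$ steps then yields the bound. The main obstacle throughout is the tightness of the $r=4$ upper bound: many charging schemes easily yield $O(n)$ but not $n-3$, so the delicate point will be identifying the right injective witness map from time steps to vertices and carefully accounting for the endpoint corrections that determine the $-3$.
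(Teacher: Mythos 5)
Your $r=3$ argument is correct and is essentially the paper's: distance halves each step, so $\tau_3(G)=\lceil\log_2\operatorname{diam}(G)\rceil$ and the path is extremal. For $r=4$, however, both halves of your plan have gaps. On the lower bound, a plain ``ladder of overlapping triangles'' (triangles $v_iv_{i+1}v_{i+2}$ sharing edges) does \emph{not} work: every quadruple $v_i,\dots,v_{i+3}$ is already a $K_4$ minus one edge at time $0$, so all rungs complete in parallel and the graph saturates in $O(\log n)$ steps. The construction must be serialized; in the paper's family $\mathcal H_t$ this is done by attaching each new degree-$2$ vertex $v_t$ to $v_{t-1}$ and to a second vertex \emph{not} adjacent to $v_{t-1}$ (condition (iv) in Section~\ref{Hfamily}), which is exactly what forces $v_t$ to wait until time $t$ (Proposition~\ref{httimepf}). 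Your ``two extra initial edges'' gesture at this but do not specify it. On the upper bound, the injectivity of your vertex-charging map is not a detail to be tightened later --- it is the entire content of the proof, and you have not said how to obtain it. The difficulty is that a time step need not touch a fresh vertex: the process can spend a step completing edges among already-infected vertices (e.g.\ when a maximal-clique union first closes up into a clique, or when two separately growing regions meet). The paper handles this by decomposing the process into sources and their expansions: a single source swallows at least one new vertex per step after its first ``clique-completion'' step (Observation~\ref{onesource}, giving $\tau\le v(G)-v(S)+1\le v(G)-3$ since $v(S)\ge 4$), and for several sources one shows (Lemma~\ref{vertexMSr4}) that a merger, which for $K_4$ requires the two expansions to go from intersection $\le 1$ to intersection $\ge 4$, absorbs at least $3$ vertices over two steps and hence can only improve the bound. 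Some structure of this kind is unavoidable; without it the charging argument does not close.

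For $r\ge 5$ the plan as stated cannot reach $\Omega(n^{3/2})$. If the blocks genuinely partition the vertex set, then each block is (after being triggered) a single-source gadget on $\sqrt n$ vertices, and Observation~\ref{onesource} applies for every $r\ge 4$: such a gadget saturates in at most $\sqrt n$ steps. Running $\sqrt n$ disjoint blocks sequentially therefore yields only $O(n)$ total time; to get $n^{3/2}$ you would need each block to run for time quadratic in its own vertex count, which is the trivial upper bound and far stronger than anything being proved. The idea your proposal is missing is \emph{vertex reuse across gadgets}: in the paper's construction $\mathcal L_h$ (Section~\ref{Lfamily}) there are $h$ layers of $h$ sources each, and a source in layer $i$ is a copy of $H_i-e$ that is active for $i$ consecutive steps but contributes only $r$ \emph{new} vertices, because its remaining $i-1$ vertices are borrowed, one from each higher layer, arranged (via the permutations $\pi_{i_1}^{i_2}$) so that any two sources share at most one vertex and no unintended $K_{r-2}$'s arise. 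This gives $v(L_h)=\Theta(h^2)$ while the activation times sum to $\sum_{i=1}^h h\cdot i=\Theta(h^3)$, i.e.\ $\tau=\Theta(v^{3/2})$. The constraint that pairwise intersections stay small (which is what the later reduction to $C_4$-free extremal graph theory formalizes) is also why this particular scheme cannot be pushed past $n^{3/2}$. Without some such sharing mechanism, any sequential chain of gadgets is capped at linear saturation time.
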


At this point, it should be noted that, independently, Bollob\'as et al. proved the same results in \cite{sattimebol}. Moreover, they gave a (significantly) better lower bound in the case $r \geq 5$, namely $\tau_{\max}(n,r) \geq n^{2-1/ \lambda(r) - o(1)}$. This bound is obtained by a probabilistic construction.

Proving Theorem~\ref{summarythm} shall be the heart of this paper and is done in Section~\ref{cpt:timegbp}. In Subsection~\ref{k3time}, we deal with the case $r=3$, in Subsection~\ref{saturationtimebound}, the linear bound for $r=4$ is obtained, as well as a criterion for which $\tau$ is at most linear for all $r\geq 4$. Subsection~\ref{Lfamily} finally gives a lower bound on $\tau_{\max}$ for $r \geq 5$. A few results on edge minimality, and hence the second part of the question posed by Gunderson et al., are presented in Subsection~\ref{edgemin}. However, they do not display new techniques.

\section{The saturation time of graph bootstrap percolation} \label{cpt:timegbp}

\subsection[$K_3$-bootstrap percolation]{\boldmath{$K_3$}-bootstrap percolation} \label{k3time}
Let us start by discussing the case $r=3$ for a minute. A nice observation tells us that $G$ percolates w.r.t. $K_3$-bootstrap percolation if and only if $G$ is connected. For the `only if' part, we make the following observation.

\begin{observation} \label{r-2conn}
Let $G$ be a graph that percolates in the $K_r$-bootstrap percolation process. Then $G$ is $(r-2)$-connected.
\end{observation}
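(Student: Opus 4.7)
My plan is a direct proof by contradiction together with a straightforward induction on time. Suppose $G$ is not $(r-2)$-connected; then (assuming $n$ is at least $r-1$, otherwise percolation is a triviality) there is a vertex cut $S \subseteq V(G)$ with $|S| \leq r-3$ such that $G - S$ decomposes as a disjoint union of nonempty sets $A$ and $B$ with no edges between them in $G$.

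The key claim is: for every $t \geq 0$, the graph $G_t$ contains no edge with one endpoint in $A$ and the other in $B$. The base case $t=0$ is immediate from the choice of $S$, $A$, $B$. For the inductive step, assume the claim for $G_t$ and suppose, for contradiction, that some edge $e=\{a,b\}$ with $a\in A$, $b\in B$ is added at time $t+1$. By the update rule, there must exist $r-2$ additional vertices $w_1,\dots,w_{r-2}$ such that $\{a,b,w_1,\dots,w_{r-2}\}$ spans a copy of $K_r$ in $G_t \cup \{e\}$; in particular, each $w_i$ is adjacent in $G_t$ to both $a$ and $b$. If any $w_i$ were in $A$, then $\{w_i,b\}$ would be an $A$-$B$ edge of $G_t$, contradicting the inductive hypothesis; similarly $w_i \notin B$. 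Hence $w_1,\dots,w_{r-2} \in S$, forcing $|S| \geq r-2$, which contradicts $|S| \leq r-3$.

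From the claim, no edge between $A$ and $B$ ever appears in $\langle G\rangle_{K_r}$, so $\langle G\rangle_{K_r} \neq E(K_n)$, contradicting the assumption that $G$ percolates. This completes the proof.

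The argument is essentially free of obstacles: the only subtle point is the interaction between edges internal to $A$, $B$, or $S$ that may well get added over time and the cross edges, but the inductive hypothesis together with the pigeonhole bound $|S| \leq r-3 < r-2$ rules out any cross edge being completed regardless of what happens internally. One minor bookkeeping item is the degenerate regime $n\leq r-2$, where no $K_r$-move is ever available and the only percolating graph is $K_n$ itself; this case can be excluded at the outset or handled by adopting the convention that $k$-connectedness requires more than $k$ vertices.
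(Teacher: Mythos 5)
Your proof is correct, and it is a cleaner and slightly different argument from the one in the paper, although both rest on the same key fact: a newly infected edge $uv$ requires a $K_{r-2}$ in the common neighbourhood of $u$ and $v$ in the current graph. The paper argues via edge cuts: for any bipartition $V(G)=A\,\dot\cup\,B$, either all crossing pairs are already edges or some crossing edge is infected at some time, at which point the common $K_{r-2}$ forces at least $r-2$ crossing edges; from this the paper concludes $(r-2)$-connectivity. As written, that argument bounds edge cuts rather than vertex cuts, so it most directly yields $(r-2)$-\emph{edge}-connectivity, and the passage to vertex connectivity is left implicit. Your version removes a candidate vertex cut $S$ with $|S|\leq r-3$ and shows by induction on $t$ that no $A$--$B$ edge can ever be infected, because the $r-2$ common neighbours of a would-be cross edge would all have to lie in $S$ by the inductive hypothesis. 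This directly establishes vertex connectivity (the standard reading of ``$(r-2)$-connected''), makes the time induction explicit rather than implicit, and correctly handles simultaneity of infections since each witnessing $K_r$ lives in $G_t\cup\{e\}$. Your remark on the degenerate regime $n\leq r-2$ is also apt; the paper does not address it either.
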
 
\begin{proof}
For an edge $e=uv$ to be added at some point in time, it needs to be the last non-infected edge of some $K_r$, and thus both its incident vertices have to have a $K_{r-2}$ in their common neighborhood; in particular, both vertices have degree at least $r-2$. Hence, if we partition $V(G) = A \, \dot{\cup} \, B$ such that $u$ lies in $A$ and $v$ in $B$, then $e(A,B) \geq r-2$. As any partition $V(G) = A \, \dot{\cup} \, B$ contains such an uninfected edge or every of the $|A| \cdot |B|$ edges is present, we get that $G$ is $(r-2)$-connected.
\end{proof}

The `if' part is also proven in a few lines: Let $G$ be a connected graph of diameter $d$. In the first step, all pairs of vertices of distance $2$ active their edge and thus $\langle G \rangle_1$ is of diameter $\lceil d/2 \rceil$. Not only does this give the result, but it tells us that any connected graph of diameter $d$ percolates after at most $\lceil \log_2 d \rceil$ time steps.

An immediate consequence is the following.

\begin{observation}
The only vertex-minimal graph $G$ satisfying $\tau(G)=t$ is the path $P_{2^{t-1}+1}$ containing $2^{t-1}+1$ edges. This is also the only edge-minimal graph w.r.t. saturation time $t$. This implies $\tau_{\max}(n,3) = \lceil \log_2 (n-1) \rceil$.
\end{observation}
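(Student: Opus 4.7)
My plan is to reduce everything to a clean distance characterisation: for any connected graph $G$ and any two vertices $u,v$, the edge $uv$ lies in $G_t$ if and only if $d_G(u,v) \leq 2^t$. I would prove this by induction on $t$. The base case $t=0$ is immediate. For the inductive step, the $K_3$-rule says $uv$ is activated in step $t+1$ precisely when $u$ and $v$ share a common neighbour in $G_t$. If $d_G(u,v) \leq 2^{t+1}$, pick a mid-vertex $w$ of a shortest $u$-$v$ path in $G$ at position $\lceil k/2 \rceil$; then $d_G(u,w)$ and $d_G(w,v)$ are each at most $2^t$, so by induction $uw, wv \in G_t$ and thus $uv \in G_{t+1}$. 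Conversely, if $uv \in G_{t+1}$ then either $uv \in G_0$ (distance one) or some common neighbour $w \in G_t$ exists, whence $d_G(u,v) \leq d_G(u,w) + d_G(w,v) \leq 2^{t+1}$ by induction.

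This yields $\tau(G) = t$ iff $2^{t-1} < \mathrm{diam}(G) \leq 2^t$ for connected $G$. In particular any $G$ with $\tau(G) = t$ contains a geodesic of length at least $2^{t-1}+1$, forcing $v(G) \geq 2^{t-1}+2$ and $e(G) \geq 2^{t-1}+1$. The path $P_{2^{t-1}+1}$ meets both bounds simultaneously.

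For vertex-minimal uniqueness, a graph $G$ with $v(G) = 2^{t-1}+2$ and $\tau(G) = t$ has a Hamiltonian geodesic, since its diameter is at least $2^{t-1}+1 = v(G)-1$; any chord between non-consecutive vertices of this geodesic would shorten the diameter below $2^{t-1}+1$, contradicting $\tau(G) = t$, so $G$ must be the path itself. For edge-minimal uniqueness, a connected graph with $e(G) = 2^{t-1}+1$ has $v(G) \leq 2^{t-1}+2$, matching the lower bound, so $G$ is a tree on $2^{t-1}+2$ vertices of diameter $2^{t-1}+1$---once again the path. The formula $\tau_{\max}(n,3) = \lceil \log_2 (n-1) \rceil$ then reads off: $\mathrm{diam}(G) \leq n-1$ gives $\tau(G) \leq \lceil \log_2 (n-1) \rceil$, and this is attained by $P_{n-1}$ on $n$ vertices.

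The only mildly delicate point is the induction setting up the distance characterisation (in particular handling parity when choosing the mid-vertex); the extremal uniqueness arguments are then direct consequences of the elementary fact that a connected graph on $n$ vertices whose diameter equals $n-1$ must be a Hamiltonian path.
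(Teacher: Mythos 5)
Your proof is correct and follows essentially the same route as the paper: the paper shows that one step of the $K_3$-process halves the diameter (so $\langle G \rangle_1$ has diameter $\lceil d/2 \rceil$), which iterated is exactly your characterisation $uv \in G_t \iff d_G(u,v) \leq 2^t$, and it then declares the extremal statements an immediate consequence. You have simply made explicit the induction and the uniqueness arguments (diameter $v(G)-1$ forces a path) that the paper leaves implicit.
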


\subsection{An allegedly minimal family} \label{Hfamily}

We now introduce a family of graphs $\mathcal H_t$ which, in the $K_r$-percolation process, add some edges at time $t$. Intuitively, one might think they are vertex-minimal w.r.t. this property. The graphs in $\mathcal H_t$ shall be defined recursively, i.e. for any $H_t$ in $\mathcal H_t$ and any $t'<t$, $H_t$ contains a subgraph which is a member of $\mathcal H_{t'}$. Before stating the definition of $\mathcal H_t$, let us think about $t=1$ for a moment. It is easy to see that the only minimal graph adding an edge at time $1$ is $K_r-e$, an $r$-clique missing exactly the edge, and thus the only member in $\mathcal H_1$ should be this very graph. Thus, $\mathcal H_1 := \{K_r-e\}$.

We define a sequence of graphs $(H_t)_{t \geq 1}$ such that $H_t$ is a member of $\mathcal H_t$. The heart of this sequence will be its \emph{body}, namely a $(r-1)$-clique which we shall call $H_0$. Furthermore, let $V_t := V(H_t)$ for all $t\geq 0$.
Now, for $t \geq 1$, let $V_t := V_{t-1} \cup \{ v_t \}$, where $v_t$ is an extra added vertex. For the edges of $H_t$, let
\begin{itemize}
\item[(i)] $H_t[V_{t-1}] = H_{t-1}$,
\item[(ii)] $v_{t-1} \in N(v_t)$,
\item[(iii)] $|N(v_t)| = r-2$,
\item[(iv)] $N(v_t) \backslash \{v_{t-1} \} \nsubseteq N(v_{t-1})$, 
\end{itemize}

where, for well-definedness, $v_0$ shall be any arbitrary vertex in $V_0$. A realization of such a graph is depicted in Figure~\ref{fig:h2tgraph}.

\vspace{0.5 cm}
\begin{figure}[H]
\centering
\includegraphics{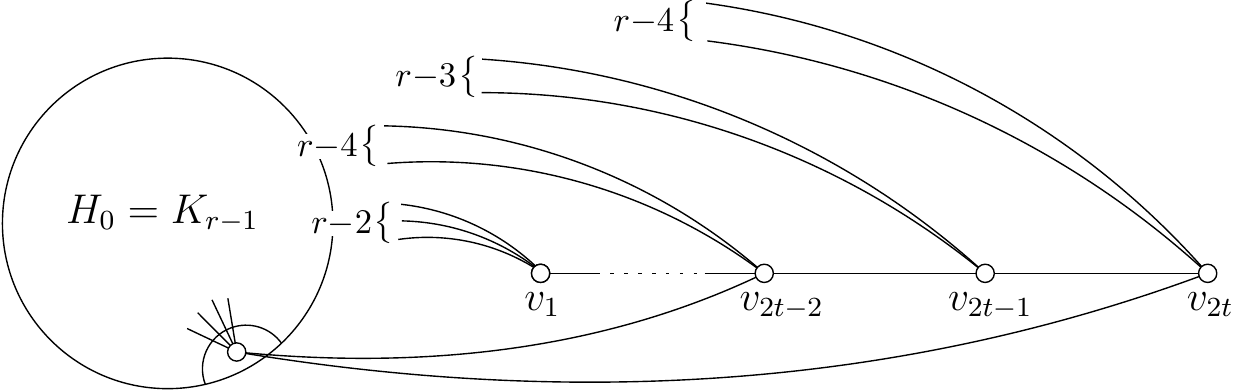}
\caption{A member of the family $\mathcal H_{2t}$. To ensure condition (iv), every vertex $v_{2i}$ ($1 \leq i \leq t)$ is connected to a designated vertex in the body, whereas every vertex $v_{2i+1}$ is not.}
\label{fig:h2tgraph}
\end{figure}
\vspace{0.5 cm}

\begin{rem}
For $H_t \in \mathcal H_t$, we have $v(H_t) = r-1+t$, and $e(H_t) = \binom{r-1}{2} + t(r-2)$.
\end{rem}

Let us now observe how the percolation process in $H_t$ progresses.

\begin{prop} \label{httimepf}
Let $H_t$ be a member of $\mathcal H_t$ for some $t \geq 1$. Then for every vertex $u$ in $V_{t-2}$, the edge $\{u,v_t\}$ is either active in $\langle H_t \rangle _0$ or is activated precisely at time $t$. Furthermore, $\langle H_t[V_s] \rangle_s \simeq K_{r-1+s}$, i.e. at time $s$, $V_s$ induces an active clique.
\end{prop}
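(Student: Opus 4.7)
The plan is to proceed by induction on $t$, proving both the edge-activation statement and the clique statement simultaneously. The base case $t=1$ is immediate: $H_1$ consists of $K_{r-1}$ on $V_0$ together with a vertex $v_1$ having $r-2$ neighbors in $V_0$, so the unique missing edge $\{v_1,w^*\}$ (where $w^*$ is the vertex of $V_0$ not adjacent to $v_1$) completes a $K_r$ and is activated at time $1$, making $V_1$ an $r$-clique.

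For the inductive step, fix $H_t\in\mathcal H_t$ and observe that $H_t[V_{t-1}]$ is a member of $\mathcal H_{t-1}$, so the inductive hypothesis applies to it. The key structural fact is that $v_t$ is initially incident to exactly $r-2$ edges, namely those to $\{v_{t-1}\}\cup W$ for some $W\subseteq V_{t-2}$ of size $r-3$. I would then establish two auxiliary statements in parallel, by a second induction on the time step $s\le t-1$: (A) at every such step, $v_t$ still has precisely the neighbors $\{v_{t-1}\}\cup W$; and (B) the edges activated within $V_{t-1}$ by time $s$ in the process on $H_t$ coincide with those activated by time $s$ in the process on $H_{t-1}$.

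Sub-claim (B) follows from (A) via a simple degree count: any $K_r-\{a,b\}$ with $a,b\in V_{t-1}$ that used $v_t$ as one of its $r-2$ common neighbors would force $v_t$ to be adjacent to all $r-1$ remaining vertices of the clique, contradicting (A); hence every such copy lies inside $V_{t-1}$, and by (B) at the preceding time step it is already present in $\langle H_{t-1}\rangle_{s-1}$. Sub-claim (A) is the main obstacle, and is where condition (iv) on the family $\mathcal H_t$ is used essentially. For $v_t$ to acquire a new neighbor at time $s\le t-1$, all $r-2$ common neighbors must be exactly $\{v_{t-1}\}\cup W$, so this set must induce a $K_{r-2}$ in $\langle H_t\rangle_{s-1}$. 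But condition (iv), combined with $v_{t-2}\in N(v_{t-1})$ from (ii), yields a vertex $w\in W$ with $w\ne v_{t-2}$ and $\{v_{t-1},w\}\notin H_{t-1}$; by the first part of the inductive hypothesis applied to $H_{t-1}$ (or, at the very first time step, simply by inspection of the initial configuration), this edge is not present before time $t-1$, and via (B) at time $s-1$ the same holds in the process on $H_t$, giving the desired contradiction.

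Once (A) and (B) are in hand for all $s\le t-1$, the second part of the inductive hypothesis applied to $H_{t-1}$ gives that $V_{t-1}$ induces an active $(r+t-2)$-clique at time $t-1$ in $\langle H_{t-1}\rangle_{t-1}$, and by (B) the same holds in $\langle H_t\rangle_{t-1}$. Then for every $u\in V_{t-2}\setminus W$, the $r-2$ vertices $\{v_{t-1}\}\cup W$ are common neighbors of $v_t$ and $u$ and form a $K_{r-2}$, producing a $K_r-\{v_t,u\}$ and activating $\{v_t,u\}$ precisely at time $t$; (A) guarantees that this edge was not already present at an earlier step. Since $v_t$ is now adjacent to every vertex of $V_{t-1}$, the set $V_t$ induces the desired $(r+t-1)$-clique, completing the induction.
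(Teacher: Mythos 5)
Your proof is correct and takes essentially the same route as the paper's: induction on $t$, using conditions (ii) and (iv) to argue that the $(r-2)$-element neighborhood of $v_t$ cannot span a $K_{r-2}$ until $v_{t-1}$'s missing edge activates at time $t-1$, after which everything completes at time $t$. Your sub-claims (A) and (B) simply make explicit a point the paper leaves implicit (that the process restricted to $V_{t-1}$ coincides with the process on $H_{t-1}$), which is a welcome bit of extra rigor rather than a different approach.
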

\begin{proof}
We prove the statement by induction on $t$. The base case is an immediate consequence of the fact that $K_r-e$ is the only graph in $\mathcal H_1$. Let thus be $t>1$. We first claim that any edge $v_tu$ is either active in $\langle H_t \rangle _0$ or is not activated before time $t$. Note that for any edge $e$ to be added, there has to be a $K_{r-2}$ which both vertices incident to $e$ are adjacent to. Then again, at time 0, $v_t$ is of degree precisely $r-2$, and due to conditions (ii) and (iv), we have that $v_{t-1}$ is a neighbor of $v_t$ but is not a adjacent to all of the remaining neighbors of $v_t$. Hence, at time 0, $v_t$ is not adjacent to any $K_{r-2}$ and certainly no incident edges are infected. Additionally, this cannot change as long as no edges incident to $v_{t-1}$ are added.

But by using (i) we know that the graph induced on $V_{t-1}$ is isomorphic to a member of $\mathcal H_{t-1}$ and so by induction hypothesis $v_{t-1}$ receives no new neighbors before time $t-1$ (i.e. no edges incident to $v_{t-1}$ are activated between times $1$ and $t-2$), which proves the claim. Even more, the induction hypothesis gives that at time $t-1$, the vertices $V_{t-1}$ form an activated clique. It is now immediate to see that in the next step of the process, every not yet infected edge $v_tu$ is added, finishing the proof.
\end{proof}


\subsection[Minimality for $r=4$ and a bound on the saturation time]{Minimality for \boldmath{$r=4$} and a bound on the saturation time} \label{saturationtimebound}

\begin{theorem} \label{r4vmin}
In the $K_4$ percolation process, we have that $\tau_4(G) \leq v(G) -3$. In other words, the graphs $\mathcal H_t$ are vertex-minimal graphs satisfying $\tau_4(G)=t$.
\end{theorem}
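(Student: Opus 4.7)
The plan is to prove by induction on $t$ that whenever the $K_4$-bootstrap process on $G$ adds a new edge at time $t \geq 1$, the graph $G_t$ contains a clique of size $t + 3$. Such a clique embeds in $V(G)$, so $v(G) \geq t + 3$, which gives $\tau_4(G) \leq v(G) - 3$. The base case $t = 1$ is immediate: the $K_4 - e$ witness of any edge added at time $1$ becomes a $K_4$ in $G_1$. The crux of the inductive step is an \emph{extension observation}: if $S$ is a clique in $G_{t-1}$ and a vertex $z \in V(G) \setminus S$ is adjacent in $G_{t-1}$ to two vertices $p, q \in S$, then $S \cup \{z\}$ is a clique in $G_t$. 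Indeed, for each $y \in S \setminus \{p, q\}$, the set $\{z, p, q, y\}$ realizes a $K_4 - zy$ in $G_{t-1}$ via the edges $zp, zq, pq, py, qy$, so $zy$ is added by time $t$.

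To carry through the induction, I would strengthen the hypothesis to state that at each time $t' \geq 1$ with a new edge added, there is a $(t' + 3)$-clique in $G_{t'}$ containing the two endpoints of \emph{some} edge added at time $t'$. Given an edge $uv$ added at time $t \geq 2$ with witness $\{u, v, w, x\}$, at least one witness edge $e^*$ must have birth time exactly $t - 1$, for otherwise all five witness edges would lie in $G_{t-2}$ and $uv$ would already appear in $G_{t-1}$. Applying the strengthened IH to $e^*$ furnishes a $(t+2)$-clique $S' \subseteq G_{t-1}$ containing the endpoints of $e^*$, and the extension observation then enlarges $S'$ to a $(t+3)$-clique in $G_t$. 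In Case~A ($e^* = wx$), both $u$ and $v$ are adjacent to $w, x \in S'$ in $G_{t-1}$, so $S' \cup \{u, v\}$ is a clique in $G_t$; since $uv \notin G_{t-1}$ prevents $u, v$ from both lying in $S'$, this clique has size at least $t+3$ and contains the endpoints of $uv$. In Case~B, where $e^*$ is incident to $u$ or $v$ (say $e^* = uw$), one adjoins $x$ to $S'$ when $x \notin S'$ (via $ux, wx \in G_{t-1}$) and adjoins $v$ when $x \in S'$ (via $vw, vx \in G_{t-1}$); in both sub-cases a $(t+3)$-clique in $G_t$ arises.

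The principal obstacle is the precise calibration of the strengthened inductive hypothesis. A naive version, requiring that the $(t+3)$-clique always contain the full witness $\{u, v, w, x\}$ of the new edge, fails on small six-vertex examples in which $v$ obtains too few neighbors in $G_t$ to lie in any $K_{t+3}$. The workable version only requires containment of the endpoints of \emph{some} newly added edge; in the Case~B sub-case where the extending vertex is $x$ rather than $v$, the resulting clique $S' \cup \{x\}$ omits $v$, so the invariant must be re-established via a different edge added at time $t$, namely an edge between $x$ and some $y \in S' \setminus \{u, w\}$ whose appearance at time $t$ is guaranteed by the extension observation applied with $z = x$ whenever $xy \notin G_{t-1}$. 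A boundary sub-case in which $x$ is already adjacent to all of $S'$ in $G_{t-1}$ is handled separately, by further extending $S' \cup \{x\}$ by the vertex $v$ using the same observation. The delicate bookkeeping in this case analysis forms the technical core of the proof.
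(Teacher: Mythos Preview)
Your clique-tracking strategy is quite different from the paper's argument, which introduces a structural decomposition of the process into \emph{sources} (maximal unions of large cliques meeting pairwise in at least $r-2$ vertices) and their growing \emph{expansions}. The paper shows that in the $K_4$ process no source can ever be inactive, isolates a source whose expansion is active at every step up to $\tau(G)$, and bounds $\tau(G)$ by counting how many vertices that expansion absorbs.

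Your proposal, however, has a genuine gap in the inductive setup. The strengthened hypothesis you state guarantees only that \emph{some} edge born at time $t-1$ has both endpoints inside a $(t{+}2)$-clique of $G_{t-1}$; yet in the inductive step you immediately apply it to the \emph{particular} witness edge $e^*$ of $uv$, which the hypothesis does not license. The obvious repair---upgrading the invariant to ``\emph{every} edge born at time $t'$ has its endpoints in a $(t'{+}3)$-clique of $G_{t'}$''---is false. Take $V=\{u,v,w,x,p,q\}$ and
\[
G_0=\{wx,\ ux,\ vw,\ vx,\ up,\ uq,\ wp,\ wq,\ pq\}.
\]
Only $uw$ is added at time $1$ (via $\{u,w,p,q\}$), and at time $2$ the edges $uv,\ xp,\ xq$ appear. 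In $G_2$ the vertex $v$ has neighbourhood $\{u,w,x\}$ of size $3$, so no $K_5$ in $G_2$ can contain $v$; hence the ``every edge'' invariant fails for $uv$ at $t=2$. Your entire case analysis (Case~A, Case~B, and the boundary sub-case) presupposes that $S'$ contains the endpoints of $e^*$, so none of it is available under the ``some edge'' hypothesis as stated.

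A salvage would have to run the induction \emph{forward}: start from the clique $S'$ supplied at time $t-1$, enlarge it to a maximal clique in $G_{t-1}$, and argue that some vertex outside it has two neighbours inside (so the extension observation fires and produces a time-$t$ edge inside a $(t{+}3)$-clique). But when no such vertex exists, one must still locate a suitable clique elsewhere, and that step is missing; it is essentially here that the paper's notion of an always-active source does the work your argument lacks.
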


The above statement will follow as a corollary from Lemma~\ref{multisource}. Let us start by introducing some notation. Given a graph $G$ in the $K_r$-percolation process, let a \emph{$0$-source} $S$ of $G$ be a maximal union of inclusion-maximal cliques such that every two of them intersect in at least $r-2$ vertices and $v(S)\geq r$. We define the \emph{expansion} of source $S$ at time $0$ to be $S$ itself. Recursively, if an edge $e$ gets infected at time $t$, then its two incident vertices must have a $K_{r-2}$ in their joint neighborhood. If such a $K_{r-2}$ happens to lie within the expansion of $S$ at time $t-1$, then add $e$ and its two incident vertices to the expansion of $S$ at time $t$. We shall use $\langle S \rangle_t$ to denote this expansion. We also shall abuse notation and identify a source with its expansion, given that no confusion arises this way.

Say $S$ is a $t$-source if $S$ is a source for the process started at time $t$ but not for any $t'<t$, and additionally, $S$ is not subset of some other source's expansion at time $t$. Moving on, say two sources $S_1, S_2$ \emph{merge} at time $t$ if the vertex intersection of the expansions of $S_1$ and $S_2$ is of size at least $r-2$ at time $t$, but not before that time. Finally, we want to talk about the \emph{active} time $\Delta \tau(S)$ of a source $S$ (or rather, its expansion) and its \emph{inactive} time. The former we define to be the number of all time steps in which $S$ either activates an internal edge or grows in size (with respect to its vertices). A $t$-source's inactive time is defined to be the $t$ time steps (starting from zero) until activation as well as the non-active time steps between points of active time steps, except for a non-active time interval before a merger---as the source shall be called \emph{depleted} within such an interval. Note that starting from the point of merger, the two respective expansions' active and inactive times coincide, and we shall therefore identify them.

We prove bounds on $\tau(G)$ under (quite strong) restrictions w.r.t. active time. First let us make an observation about one-source graphs.

\begin{observation} \label{onesource}
Let $G$ be a graph that has exactly one source $S$. Then $\tau(G) \leq v(G)-v(S)+1$, i.e. the $K_r$-percolation process in $G$ (with $r\geq 4$) stagnates after at most $v(G)-v(S)+1$ steps. Furthermore, $\langle S \rangle_{\tau(G)}$ is a clique.
\end{observation}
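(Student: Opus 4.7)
The plan is to first exploit the uniqueness of the source. Suppose an edge $e$ is activated at time $t$. If its $K_{r-2}$ witness were outside $\langle S\rangle_{t-1}$, then $e$ together with this witness would form a $K_r$ not contained in $\langle S\rangle_t$, and this $K_r$ would therefore be a new $t$-source, contradicting our hypothesis. Hence every activated edge joins the expansion of $S$, and every vertex incident to some new edge lies inside $V_{\tau(G)}:=V(\langle S\rangle_{\tau(G)})$.

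With this in hand I would show that $V(S)$ becomes a clique inside $\langle S\rangle_1$: given non-adjacent $u,v\in V(S)$, pick inclusion-maximal cliques $C_u\ni u$ and $C_v\ni v$ of $S$; by the source definition $|C_u\cap C_v|\geq r-2$, so $C_u\cap C_v$ supplies a $K_{r-2}$-witness in the common neighborhood of $u,v$, and $uv$ is added at time $1$. Thus $V(S)$ induces a clique in $\langle S\rangle_t$ for every $t\geq 1$.

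The core of the argument is to bound the number of ``internal-only'' active steps (those in which $V_t=V_{t-1}$) by at most one. By induction on the time of attachment I would argue that every $v\in V_t\setminus V(S)$ becomes adjacent to all of $V(S)$ within one extra step of its arrival: its entering $K_{r-2}$ witness already supplies $r-2$ neighbors inside $\langle S\rangle$, and once $V(S)$ is a clique of size $\geq r$, the clique $V(S)$ itself yields the $K_{r-2}$-witnesses needed to fill $v$'s missing edges to $V(S)$. Once every vertex of $V_t\setminus V(S)$ is adjacent to all of $V(S)$, any two such vertices share the $\geq r$-vertex clique $V(S)$ as common neighborhood, so one single additional internal step suffices to complete $\langle S\rangle$ to a clique on $V_t$. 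Counting then yields $\tau(G)\leq(v(G)-v(S))+1$: at most $v(G)-v(S)$ active steps can grow $V_t$, and at most one internal step follows.

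The same adjacency argument gives the clique conclusion: if at time $\tau(G)$ some pair of vertices in $V_{\tau(G)}$ were not adjacent, the clique $V(S)$ would still supply a $K_{r-2}$-witness for their edge, contradicting the process having stopped. The hardest step will be the inductive claim that every new vertex reaches full adjacency to $V(S)$ within a bounded number of steps, since vertices whose entering witness lies partially outside $V(S)$ initially have fewer than $r-2$ direct neighbors in $V(S)$, and one has to chase the attachments carefully to rule out repeated internal-only steps.
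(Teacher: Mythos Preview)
Your decomposition into ``growing steps'' plus ``at most one internal-only step'' is where the argument breaks. Take $r=4$, let $S$ be a $K_4$ on $\{1,2,3,4\}$, join $5$ to $\{1,2\}$, $6$ to $\{3,4\}$, and $7,8$ each to $\{5,6\}$. Then at time~$1$ the vertices $5,6$ enter and become fully adjacent to $V(S)$; at time~$2$ only the edge $56$ appears (internal-only); at time~$3$ the vertices $7,8$ enter; at time~$4$ the remaining edges among $\{5,6,7,8\}$ appear (internal-only again). That is two internal-only steps, not one. More generally one can chain such gadgets and get an internal-only step after every wave of arrivals, so the number of internal-only steps is not bounded by any absolute constant. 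Your inductive claim that each new vertex reaches full adjacency to $V(S)$ quickly is plausible, but it says nothing about edges \emph{between} vertices outside $V(S)$, and those are precisely what produce the repeated internal-only steps.

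The paper's proof sidesteps this by maintaining a stronger invariant: at the end of each ``phase'' the entire expansion (not just its intersection with $V(S)$) is a clique. A phase consists of one step if a single outside vertex is absorbed, and two steps if several are absorbed simultaneously (one step to connect them to the current clique, one more to connect them to each other). Either way the number of vertices added in a phase is at least the number of steps in that phase, and summing gives $\tau(G)-1\le v(G)-v(S)$. The extra $+1$ is the initial step in which $S$ itself becomes a clique. To salvage your approach you would need to pair each internal-only step with a distinct growing step in which at least two vertices arrived, which is effectively what the paper's phase argument does.
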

\begin{proof}
Note that $S$ is a $0$-source. At time $t=1$, the graph induced by the source is a clique. This is clear, since the pair of vertices incident to any not yet activated edge in the source has a $K_{r-2}$ in its joint neighborhood by definition (this time step is responsible for the one additional time step in the bound). At time $t=2$, all `outside' vertices which have at least $r-2$ neighbors in the source activate their remaining edges to the source. If there were two or more such outside vertices, then at time $t=3$, internal edges between them become active---the source clique `swallows' a single vertex in one time step and multiple vertices in two time steps. It is a larger clique afterwards and the routine repeats. In addition to that, it is clear that no infections outside of the expansion of the clique can occur at any time. As long as the percolation process keeps going, every two time steps, at least two vertices are added and we arrive at the asserted bound.
\end{proof}

A quick corollary is that any graph with at most one source is saturated w.r.t. $K_r$-percolation after at most $v(G)-(r-1)$ steps, as any source must contain at least $r$ vertices by definition. Furthermore, it is easy to see that a graph with no source has saturation time zero, i.e. nothing happens in the percolation process.

The next lemma, which turns out to be rather straight forward to prove, shows that for $K_4$-percolation, multiple sources cannot increase the saturation time. When dealing with multiple sources, we may exhibit mergers. If $\mathcal S$ is the set of sources, define the \emph{merger tree} $\mathcal T \subseteq S$ of some source $S$ as follows. Examine the expansion which $S$ is a subset of at time $\tau(G)$ and let $\mathcal T$ consist of all sources which are also subset of this expansion. Call a merger tree \emph{comprehensive} if its final expansion is active until time $\tau(G)$. As a side note, the set of all merger trees is a partition of $\mathcal S$.

\begin{lemma} \label{vertexMSr4}
Let $G$ be a graph with at least two sources. If no source has inactive time, then there exists a source $S$ such that
\al{\tau(G) \leq v(G) - v(S).}
\end{lemma}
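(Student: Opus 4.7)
My approach is a case analysis on the merger-tree structure of the sources at time $\tau(G)$, aiming each time to save the ``$+1$'' from the one-source bound of Observation~\ref{onesource}. Sources partition into merger trees under the equivalence relation ``eventually share $\geq r-2 = 2$ expansion vertices,'' and the final reaches of two distinct merger trees share at most $r - 3 = 1$ vertex (else they would have merged).

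\textbf{Case 1: multiple merger trees.} Fix a merger tree $\mathcal{T}$ whose expansion is active until time $\tau(G)$ and pick any source $S \in \mathcal{T}$. A one-source-style argument applied within $\mathcal{T}$'s final reach $X_\mathcal{T}$ yields $\tau(G) \leq v(X_\mathcal{T}) - v(S) + 1$. Since some other merger tree contains at least $r = 4$ vertices outside $X_\mathcal{T}$ (minus at most one shared), $v(X_\mathcal{T}) \leq v(G) - 3$ and thus $\tau(G) \leq v(G) - v(S) - 2 < v(G) - v(S)$.

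\textbf{Case 2: a single merger tree with reach $X$.} If $X \subsetneq V(G)$, the same argument yields $\tau(G) \leq v(X) - v(S) + 1 \leq v(G) - v(S)$. If $X = V(G)$, then since there are at least two sources, a merger has occurred; pick two sources $S_1, S_2$ merging earliest at some time $t_m \geq 1$. Before $t_m$ each source grows in isolation, so Observation~\ref{onesource} gives $t_m \leq v(\langle S_i \rangle_{t_m}) - v(S_i) + 1$ for $i = 1, 2$. At time $t_m + 1$, the two expansions share $c \geq 2$ vertices, so any two non-shared vertices have at least $2$ common neighbors in the intersection, forcing all crossing edges to activate and producing a clique on the combined set $V_c$ of size $v(\langle S_1 \rangle_{t_m}) + v(\langle S_2 \rangle_{t_m}) - c$. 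From $t_m + 1$ onward we run a one-source process starting already from a clique, so the remaining time is bounded by $v(G) - |V_c|$ without an extra ``$+1$.'' Combining,
\al{\tau(G) \leq (t_m + 1) + (v(G) - |V_c|) \leq v(G) - v(S_1) + 2 + c - v(\langle S_2 \rangle_{t_m}),}
which collapses to the desired $\tau(G) \leq v(G) - v(S_1)$ once the inequality $v(\langle S_2 \rangle_{t_m}) - c \geq 2$ is verified.

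\textbf{Main obstacle.} The crux is establishing $v(\langle S_2 \rangle_{t_m}) - c \geq 2$, that is, that $S_2$'s expansion still holds at least two vertices outside $\langle S_1 \rangle_{t_m}$ at the merger moment. The ``no inactive time'' assumption should preclude $S_2$ being trivially absorbed into $S_1$'s expansion (since $S_2$ must keep doing something every step, either growing or activating internal edges), while the distinctness of $S_1, S_2$ as initial sources ensures that their expansions shared fewer than two vertices at every time strictly before $t_m$. Making this rigorous will likely require a separate invariant tracking the disjoint ``cores'' of two distinct sources, or a subcase analysis on whether the initial sources $S_1, S_2$ happen to be cliques already; the remaining steps of the proof are then routine bookkeeping.
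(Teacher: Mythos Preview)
Your Case~1 and the $X\subsetneq V(G)$ sub-case of Case~2 are essentially the paper's ``no merger'' branch, though you must pick $S$ carefully: not \emph{any} $S\in\mathcal T$ works, only one that is active at every step (the paper extracts such an $S$ by backwards analysis through the mergers of a comprehensive tree). This is a minor slip.

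The real issue is your ``main obstacle'' in Case~2 when $X=V(G)$: the inequality $v(\langle S_2\rangle_{t_m})-c\geq 2$ can genuinely fail. Take $r=4$, let $S_2$ be a $K_4$ on $\{a,b,c,d\}$ (so $S_2$ is depleted from the outset, which is allowed under ``no inactive time''), let $a\in S_1$, and give each of $b,c,d$ one further neighbour inside $S_1$ adjacent to $a$. Then at time $t_m=1$ all of $b,c,d$ are swallowed by $\langle S_1\rangle$, so $\langle S_2\rangle_{t_m}\subseteq\langle S_1\rangle_{t_m}$, $c=4=v(\langle S_2\rangle_{t_m})$, and your bookkeeping only yields $\tau(G)\le v(G)-v(S_1)+2$. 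Your proposed fixes (tracking ``disjoint cores'', or a clique/non-clique sub-split) do not obviously rescue this: the phenomenon is precisely that one source's expansion can absorb the other entirely at the merger instant.

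The paper sidesteps this by never trying to credit both merging sources simultaneously. It stays with the single always-active source $S$ and observes that a merger forces $v(\langle S\rangle_{t+1})-v(\langle S\rangle_{t-1})\ge 3$: the other source contributes at least $r$ vertices, at most $r-3$ of which were already in $\langle S\rangle_{t-1}$. That single ``three vertices in two steps'' beat is exactly the saving of the $+1$, and it needs no control on how much of $S_2$ survives outside $S_1$. Your argument can be repaired along the same lines, but at that point it collapses to the paper's proof.
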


\begin{proof}
The key observation is that there has to be a source $S$ which is active for $\tau(G)$ time steps. This becomes clear by employing backwards analysis. We consider a comprehensive merger tree $\mathcal T$ (there always has to be at least one such merger tree) and claim that $S$ is to be found in it. It is clear that between the latest time $t$ when a merger between expansions of $\mathcal T$ happens and $\tau(G)$, the merger tree $\mathcal T$ (which is nothing but the expansion of the source of the process started at time $t$) is active in every time step. Furthermore, one of the expansions involved in the $t$-merger had to be active at time $t$.

Taking this very expansion, we note that it itself is again a comprehensive merger tree for the process up to time $t$ and so an inductive argument on $\tau(G)$ yields the claim, as eventually, we work our way through all layers of mergers and end up with the active expansion of some source, which is the sought after $S$.

We now apply (the spirit of) Observation~\ref{onesource} to $S$. Being active every time step and forgetting about all other sources' infections, we get the bound $\tau(G) \leq v(G) - v(S) + 1$. If $S$ merges with some other source at some time $t$, then
\al{v(\langle S \rangle_{t+1}) - v(\langle S \rangle_{t-1}) \geq 3,} 
that is, the expansion of $S$ grows by at least 3 within two time steps---this is due to the merger with a source, which intersects the expansion of $S$ in at most $r-3$ vertices at time $t-1$ and in at least $r$ vertices at time $t+1$. This strengthens the bound on $\tau(G)$ by one and proves the lemma. On the other hand, if $S$ never merges with another source, then there exists a source $S'$ such that $v(S') \geq r$ and $S$ and $S'$ intersect in at most $r-3$ vertices at any time. hence, we know there are at least three vertices in $G$ which are never touched by $S$, that is, never contained in the expansion of $S$, and thus
\al{\tau(G) \leq v(G)-3 - v(S)+1 \leq v(G) - v(S).}
\end{proof}

\begin{proof}[Proof of Theorem~\ref{r4vmin}]
We just need to verify the conditions of Lemma~\ref{vertexMSr4}. Therefore it suffices to note that in the $K_4$ process, there can only be $0$-sources---in fact, there cannot be inactive times, as a reactivation from another source forces an intersection of size at least two and thus implies a merger. Hence, processes with two or more sources satisfy $\tau_4(G) \leq v(G) - 3$, which means vertex-minimal graphs have only one source and satisfy $\tau_4(G) = v(G)-3$.
\end{proof}

We now prove a much more structural bound on the saturation time that holds when an additional, restrictive condition on the percolation process is imposed. To that end, note that a comprehensive merger tree of size $k$ exhibits $k-1$ mergers (if $j$ expansions merge into one in the same step, count them  as $j-1$ mergers), and the times of these mergers can be ordered $t_1(\mathcal T) \leq \ldots \leq t_{k-1} (\mathcal T)$. A \emph{protracted} comprehensive merger tree of size $k$ is one such that for any other comprehensive merger tree $\mathcal T'$ of size $k$, we have $t_i(\mathcal T) \geq t_i(\mathcal T')$ for all $1 \leq i \leq k-1$.

\begin{lemma} \label{multisource}
Let $G$ be a graph with at least two sources. If no source has inactive time, and if additionally, any merger involves only two merger trees (w.r.t. to the process up to that time), then for any largest protracted comprehensive merger tree $\mathcal T$, we have
\al{\tau(G) \leq v(G) - \sum\limits_{S \in \mathcal T} v(S) + (|\mathcal T|-1)r.}
\end{lemma}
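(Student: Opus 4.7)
The plan is to generalize the one-source growth-rate argument of Observation~\ref{onesource} to the multi-source setting by tracking the union of all active sub-expansions of $\mathcal T$ across time, and then extract the bound from the constraint that this union is contained in $V(G)$.

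First I would fix notation: write $\mathcal T = \{S_1, \ldots, S_k\}$ with binary merger times $0 = t_0 \leq t_1 \leq \ldots \leq t_{k-1} \leq t_k = \tau(G)$, intervals $I_i = [t_{i-1}, t_i]$ of length $d_i$, and $\mathcal C(t)$ the set of active sub-expansion components of $\mathcal T$, which has size $k - i + 1$ for $t \in I_i$ (since exactly one pairwise merger happens at each $t_i$). Let $U_t = \bigcup_{E \in \mathcal C(t)} V(E)$; then $|U_t| \leq v(G)$ throughout, while $|U_0| \geq \sum_{S \in \mathcal T} v(S)$ minus pairwise source overlaps, each of which is at most $r-3$ (otherwise two distinct sources would share a $K_{r-2}$, contradicting the source-maximality in the definition).

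The core step is to adapt the rate analysis of Observation~\ref{onesource}: between consecutive mergers, each active component of $\mathcal T$ evolves as an isolated one-source process, so by the ``no inactive time'' hypothesis it gains at least $d_i - 1$ vertices in $I_i$. The vertices gained by such an isolated component are fresh with respect to the whole union $U_t$, hence $|U_t|$ grows at an average rate of at least one per step except possibly at merger moments. Summing these contributions gives $|U_{\tau(G)}| \geq \sum v(S_i) + \tau(G) - (k-1) r$, and combining with $|U_{\tau(G)}| \leq v(G)$ rearranges to the claimed bound.

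The main technical obstacle is bookkeeping the $(k-1)r$ slack precisely. Each binary merger at time $t_i$ combines two expansions $E_1, E_2$ with $|V(E_1) \cap V(E_2)| \geq r-2$, so $|U_t|$ need not grow at $t_i$, and a further ``setup'' step may be required before the merged expansion resumes clique-like growth (mirroring the one-step slack in Observation~\ref{onesource}); together with the pairwise source-overlap correction of up to $r-3$, I expect to show each merger costs at most $r$ growth-steps in aggregate. The binary-merger hypothesis is essential so the accounting reduces cleanly to $k-1$ pairwise events instead of more tangled multi-way combinations, and the ``no inactive time'' hypothesis rules out pauses that would invalidate the rate-$1$ growth. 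The ``protracted'' and ``largest'' qualifiers on $\mathcal T$ single out the merger tree for which this bookkeeping is tight: late mergers minimize cumulative setup overhead, and maximizing $k$ credits every available source's contribution to the bound.
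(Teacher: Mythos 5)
Your global accounting scheme (track the union $U_t$ of the active components of $\mathcal T$, show it grows at rate one per step up to a loss of $r$ per merger, and compare with $v(G)$) is in the same spirit as the paper's argument, which instead peels off the first merger and inducts on $|\mathcal T|$. However, your sketch has a genuine gap at its central step: the claim that between mergers each component ``evolves as an isolated one-source process'' and that ``the vertices gained by such an isolated component are fresh with respect to the whole union $U_t$'' is false. Two expansions whose intersection is still smaller than $r-2$ (so they have not yet merged) can nevertheless infect edges \emph{between} each other: if $xy$ becomes active with $x$ in one expansion and $y$ in the other, and the witnessing $K_{r-2}$ lies in the first, then $y$ is absorbed into the first expansion even though it already belongs to $U_t$. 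During such a step a component is active but $|U_t|$ does not grow, so your rate-one claim for $|U_t|$ does not follow from activity alone. This is exactly the issue the paper's proof isolates as ``inner infections''; the saving grace is that each inner infection strictly enlarges the intersection of the two expansions, so there are at most $r-2-v(S_1\cap S_2)$ of them before the pair merges, and this quantity (together with the initial overlap $v(S_1\cap S_2)$ and the boundary steps) is what actually fills the budget of $r$ per merger. Your accounting only charges the static initial overlap (at most $r-3$) plus a setup step, so it omits precisely these dynamically wasted steps; the numbers happen to still fit inside $(|\mathcal T|-1)r$, but the argument as written does not establish that.

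A secondary inaccuracy: ``no inactive time'' does not imply that every active component gains a vertex at every step of its interval, because the paper's definition explicitly exempts a non-active stretch immediately preceding a merger (the source is then \emph{depleted}, not inactive). The paper only uses the weaker fact that in each merging pair one expansion is active throughout and the other for at least its first step, i.e.\ $\Delta\tau^*(S_1)+\Delta\tau^*(S_2)\geq t^*+1$. Your per-component claim of $d_i-1$ fresh vertices per interval is therefore too strong; to repair the proposal you would need to downgrade it to this pairwise statement and add the inner-infection bookkeeping above. Finally, note that the paper allows several disjoint pairs to merge simultaneously and uses the ``protracted'' hypothesis specifically so that the post-merger tree is again a largest protracted comprehensive merger tree for the restarted process (making the induction hypothesis applicable), rather than for the ``late mergers minimize overhead'' reason you suggest.
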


\begin{proof}
Let $\mathcal T$ be a largest protracted comprehensive merger tree with $|\mathcal T|=k$. We prove the statement by induction on $k$. For the base case let $k=1$, which means in the bootstrap percolation process, no merger occurs. In this case, the bound simplifies to what was already proven in Lemma~\ref{vertexMSr4} in a more general setting.

So let $|\mathcal T| = k>1$ and choose $\mathcal T=\{S_1, \ldots, S_k\}$ to be a protracted largest comprehensive merger tree. Let $t^*$ be the first time two sources merge in $\mathcal T$, and let $(S_1, S_2), (S_3,S_4), \ldots, (S_{j-1},S_j)$ for some even $2 \leq j \leq k$ be the pairs of merging sources. Note that the process started at time $t^*$ is one where every comprehensive merger tree is of size at most $k-1$. In particular, setting $S_i^* := \langle S_i \rangle_{t^*}$ and $S_{i,i+1}^*=S_i^* \cup S_{i+1}^*$, we have that
\al{\mathcal T^* = S_{1,2}^* \cup \ldots \cup S_{j-1,j}^* \cup \bigcup\limits_{i=j+1}^k S_i^*}
is a protracted largest comprehensive merger tree for the process started at time $t^*$. This follows from the fact that no other comprehensive merger tree may have less pairs merging at time $t^*$, otherwise it would have been more protracted than $\mathcal T$. So by induction hypothesis,
\begin{align} \label{eq:r4hyp}
\tau(G) & \leq t^* + v(G) - \sum\limits_{S \in \mathcal T^*}v(S) + (|\mathcal T^*|-1)r.
\end{align}
Let us now obtain a bound on the size of $S_{1,2}^*$ (representative for all other merging pairs) in terms of $S_1$ and $S_2$. That is, we want to track the growth of those two parent sources up until the point they merge into $S_{1,2}^*$. In order to do so, it is crucial to realize that there are two types of infections before time $t^*$ associated with $S_1$ and $S_2$, namely outer and inner infections. The first is the activation of an edge such that one or both incident vertices are contained in one of the expansions, whereas none is contained in the other one. Outer infections can be regarded as the infection type of the growing process examined in Observation~\ref{onesource} and shall be bounded in the same way.

Inner infections on the other hand happen if some edge between the two expansions is activated. Note that every inner infection increases the intersection of the two expansions: Indeed, if edge $xy$ with vertices $x$ in $S_1$ and $y$ in $S_2$ becomes active at time $t$ and the `culprit' $K_{r-2}$ located within the joint neighborhood of $x$ and $y$ lies within $S_1$, then any not yet active edge $wy$ with $w$ in $S_1$ is infected at time $t$ as well; consequently, $y$ joins the expansions' intersection.

Putting these observations together, we see that outer infections enlarge $v(S_{1,2}^*$) (in the sense of Observation~\ref{onesource}), and inner infections do not, yet there can only be $r-2-v(S_1 \cap S_2)$ many. Denoting by $\Delta \tau^*(S)$ the active time up to time $t^*$ of source $S$ (or its expansion, respectively), we obtain
\al{v(S_{1,2}^*) & \geq v(S_1) + v(S_2) - v(S_1 \cap S_2) + \left( \sum\limits_{i \in \{1,2\}}  \Delta\tau^*(S_i)-1 \right) - \left(r-2-v(S_1 \cap S_2)\right) \\
& \geq v(S_1) + v(S_2) + t^* - r + 1,}
where we used the fact that $\Delta\tau^*(S_1) + \Delta\tau^*(S_2) \geq t^* + 1$. This holds true as one of the two sources is active the whole time (there are no inactive times) and the other is active for at least the very first step (might or might not deplete afterwards). Plugging this bound into Inequality~\eqref{eq:r4hyp}, we get
\al{\tau(G) & \leq t^* + v(G) - \sum\limits_{i=1}^{j/2} v(S_{2i-1,2i}^*) - \sum\limits_{i =j+1}^k v(S_i^*) + (|\mathcal T^*|-1)r \\
& \leq t^* + v(G) - \sum\limits_{i=1}^{j/2} v(S_{2i-1,2i}^*) - \sum\limits_{i =j+1}^k v(S_i) + (k-j-1)r \\
& \leq v(G) - \sum\limits_{i=1}^k v(S_i) + (k-1)r \\
& = v(G) - \sum\limits_{S \in \mathcal T} v(S) + (|\mathcal T|-1)r}
and conclude the inductive step.
\end{proof}

\subsection{Another family of graphs}\label{Lfamily}
In this section we introduce another family of graphs in order to give an intuition why family $\mathcal H_t$ fails to be minimal for $K_r$-percolation with $r \geq 5$. These graphs are called $\mathcal L_h$, where $h$ does not denote the saturation time of the graph, but the number of layers it contains. For some graph $L_h$ in $\mathcal L_h$, we have that
\al{V(L_h) = \bigcup\limits_{i=1}^h \bigcup_{j=1}^h V(S_{i,j}) \cup V(B_{i,j}).}

We call $S_{i,j}$ and $B_{i,j}$ the $j$th source and bridge of the $i$th layer, respectively. Note that if we were to be coherent, then with respect to the definition of a source from Section~\ref{saturationtimebound}, $S_{i,j}$ should rather be called an expansion. Let us describe $L_h$ on an informal level first. The percolation process shall sequentially run through the different layers, and there will always be exactly one source active (note that the bridges are also sources that were just designated different names). Yet, in the $i$th layer, the sources $S_{i,j}$ shall be active for $i$ time steps. Intersecting above layers, they can achieve this without increasing the number of vertices of $L_h$.

So let us construct $L_h$ formally. Note that we want $L_h[S_{i,j}] \simeq H_i-e$ for all $1 \leq j \leq h$ (except for $i=j=1$), where $H_i-e$ is a member of $\mathcal H_i$ from Section~\ref{Hfamily} missing an edge from within its body. Independently of the layer, every bridge shall be isomorphic to a $K_r$ missing 2 edges. Once one of those edges is activated, in the next time step, the bridge activates the second edge and bridges the bootstrap process happening in its two neighboring sources. To define what its neighboring sources are, regarding vertex intersections, we have that 
\al{|S_{i,j} \cap B_{k,l}| \in \begin{cases} \{2\} &\mbox{if } 1\leq i=k \leq h, 1\leq j=l\leq h, \\ \{2\} & \mbox{if } 1 \leq i=k \leq h, 1 \leq j=l+1 \leq h,  \\
\{2\} &\mbox{if } 1\leq i = k+1 \leq h, j=h, l=1, \\ \{0,1\} &\mbox{otherwise}, \end{cases}}
and
\al{|B_{i,j} \cap B_{k,l}| \in \{0,1\} \text{ for } (i,j) \neq (k,l), \\
|S_{i,j} \cap S_{k,l}| \in \{0,1\} \text{ for } (i,j) \neq (k,l).}

Regarding edge intersections, if some bridge and source intersect in two vertices, we want this intersection to be stable (i.e., the potential edge is not there). In the same layer, every pair of sources and every pair of bridges has empty intersection, whereas bridges and sources in the same layer intersect in two vertices if neighboring (see above) and do not intersect otherwise.

At time $t=0$, the only active source shall be $S_{1,1}$. The graph induced by the first layer is just a chain of alternating bridges and sources (roughly speaking, every layer looks like this), which each will be active for exactly one time step. $B_{1,h}$ then activates $S_{2,1}$, which is now the first source in layer 2 (note that $B_{h,h}$ is a dummy bridge). As written above, $S_{2,2}$ should have r+1 vertices. Adopting notation of Section~\ref{Hfamily}, the body and $v_1$ are located in layer 2, whereas $v_2$ is an arbitrary vertex from a source in the first layer. As the same should hold true for every source in the second layer, and since we do not want these sources to intersect, we introduce a permutation $\pi_2^1$ designating to every source $S_{2,j}$ where its $v_2$ vertex can be found.

We want the same scheme for all layers. In particular, A source $S_{i,j}$ has $r$ vertices (the body and $v_1$) in the $i$th layer and precisely one in every above layer (vertex $v_t$ in layer $i-t+1$). For $h \geq i_1 \geq i_2 \geq 1$, we thus introduce a family of permutations $\pi_{i_1}^{i_2}: [h] \to [h]$, where $\pi_{i_1}^{i_1}$ is the identity function. For $i_1 > i_2$, we want that following: If $\pi_{i_1}^{i_3}(j_1) = \pi_{i_2}^{i_3}(j_2)$ for some values $1\leq j_1, j_2 \leq h$ and some $i_3 \leq i_2$, then for all $1\leq i_4 < i_3$ it holds true that $\pi_{i_1}^{i_4} (j_1) \neq \pi_{i_2}^{i_4}(j_2)$. In words, once a source $S_{i_1,j_1}$ intersects some source $S_{i_2,j_2}$ in a lowest layer $i_3$, then the two do not intersect in any other layer. Here, `intersect' means that both intersect the same source (the one with index $(i_3, \pi_{i_1}^{i_3}(j_1))$ in this case).

A quick moment of reflection confirms that this is indeed well-defined. Every source in layer $i$ intersects $i-1$ other sources with its vertices $v_2, \ldots, v_i$ and since $i \leq h$, even in the top layer, there is still a permutation $\pi_i^1$ satisfying the demands.

Having fixed how the sources intersect each other, and knowing about the edges within every source (as they are isomorphic to the graphs from Section~\ref{Hfamily}), we still need to attend to the bridges of lower layers. As demanded above, $B_{i,j}$ intersects sources $S_{i,j}$ and $S_{i,j+1}$ (where the latter should be thought of as $S_{i+1,1}$ for bridge $B_{i,l}$). The intersection $B_{i,j} \cap S_{i,j+1}$ shall be two vertices from the body of $S_{i,j+1}$, whereas the $B_{i,j}$ shall intersect $S_{i,j}$ in its $v_i$ vertex plus another vertex $u$ such that edge $v_i u$ gets added at time $i$ in the percolation process running on $L_h[S_{i,j}] \simeq H_i-e$ (once $e$ is activated).

A partial realization for a graph in $\mathcal L_h$ is depicted in Figure~\ref{fig:tpercL3}. Next, we intend to investigate some properties of $\mathcal L_h$. At this point, let us remark that the bridges might be omitted in this construction, but are included as they clarify construction and do not hurt the ratio between saturation time and vertex number too much.

\begin{figure}[H]
\centering
\includegraphics{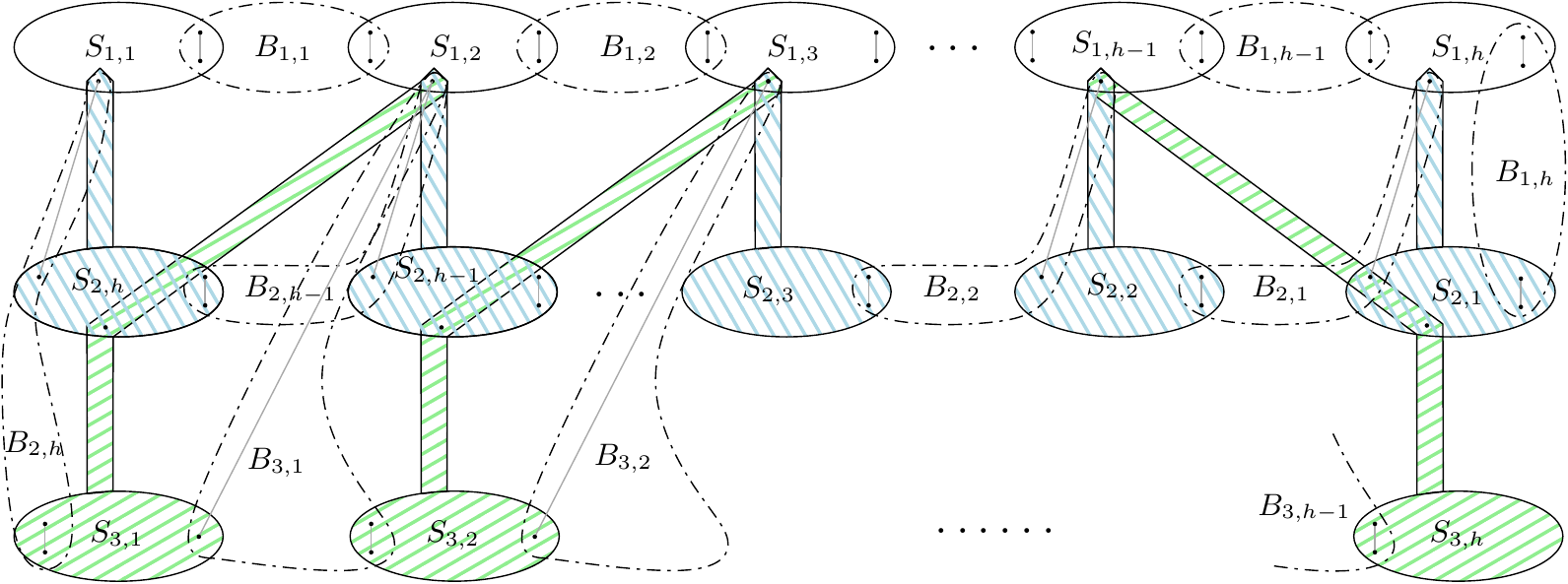}
\caption{The first three layers of a member of $\mathcal L_h$.}
\label{fig:tpercL3}
\end{figure}
\vspace{0.5 cm}

\begin{lemma}\label{Lfamilylemma}
Let $L_h$ be a member of $\mathcal L_h$. Then for $r \geq 5$,
\al{v(L_h) = 2rh^2 - 4h^2 + 2, \quad e(L_h) = h^2(r^2 - \tfrac{3}{2}r-3) + h^3(\tfrac{1}{2}r-1), \quad \tau_r(L_h) = \tfrac{1}{2} h^2(h+3).}
\end{lemma}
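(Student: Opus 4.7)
The proof amounts to verifying three quantitative statements; the first two are direct bookkeeping, and the third uses Proposition~\ref{httimepf} as a black box. I would structure the argument accordingly.

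For the vertex count, I count the ``fresh'' vertices contributed by each source and bridge, keeping track of the identifications imposed by the permutations $\pi_{i_1}^{i_2}$. Each source $S_{i,j}$ has $r-1+i$ vertices, of which $r$ lie in its own layer (the body together with $v_1$) and the remaining $v_2,\ldots,v_i$ are identified with vertices of sources in lower layers; since distinct sources in the same layer are disjoint, every source contributes exactly $r$ new vertices, totalling $rh^2$. Each non-dummy bridge has $r$ vertices, four of which are shared with its two neighboring sources, contributing $r-4$ new ones; the dummy $B_{h,h}$ shares only two vertices (with $S_{h,h}$) and contributes $r-2$. Summing,
\al{v(L_h) = rh^2 + (h^2-1)(r-4) + (r-2) = 2rh^2 - 4h^2 + 2.}

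For the edge count, any two sources share at most one vertex (hence no edge), any two bridges share at most one vertex, and whenever a bridge and a source share two vertices the edge between them is declared absent. Consequently no edge of $L_h$ lies in two of the subgraphs $S_{i,j}, B_{i,j}$, and $e(L_h)$ is the sum of $e(S_{i,j})$ and $e(B_{i,j})$ over all sources and bridges. Substituting $e(S_{i,j}) = \binom{r-1}{2} + i(r-2) - 1$ and $e(B_{i,j}) = \binom{r}{2} - 2$, a direct calculation (using $\binom{r-1}{2}+\binom{r}{2} = (r-1)^2$ and $\sum_{i=1}^h i = h(h+1)/2$) yields the stated expression.

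For the saturation time, I would trace the percolation sequentially through the layers, relying on three claims: (a) a source $S_{i,j}$ with $(i,j)\neq(1,1)$ cannot begin adding internal edges until its body trigger edge $e$ has been supplied by the preceding bridge, because any $K_{r-2}$ witnessing activation of an edge of $S_{i,j}$ must lie inside a single source or bridge---the condition $|S_{i,j}\cap S_{k,l}|\le 1$ together with the permutation-compatibility requirement forbids assembling a $K_{r-2}$ from vertices of two different sources; (b) once the trigger is present, $L_h[S_{i,j}]$ is isomorphic to $H_i$, and by Proposition~\ref{httimepf} it percolates in exactly $i$ further steps, the final of which also supplies the first missing edge of the adjoining bridge; (c) each bridge is then a $K_r - e$ and completes in one additional step, whose edge is the trigger of the next source. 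Hence layer $i$ consumes $h(i+1)$ time steps, and summing over layers gives
\al{\tau_r(L_h) = \sum_{i=1}^h h(i+1) = h\left(\tfrac{h(h+1)}{2} + h\right) = \tfrac{1}{2} h^2(h+3).}

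The bookkeeping steps and claims (b), (c) follow almost immediately from the construction and from Proposition~\ref{httimepf}; the substantive content is in (a), where one must rule out that the intricate cross-layer overlap pattern allows an unanticipated $K_r - e$ to form and short-circuit the intended schedule. The permutation condition---that two distinct sources meet in at most one common lower-layer source---is exactly what caps their mutual common neighbourhood below $r-2$, so (a) should succumb to a careful but routine case analysis of where a hypothetical premature $K_{r-2}$ could live.
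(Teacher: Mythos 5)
Your proposal is correct and follows essentially the same route as the paper: the vertex and edge counts are obtained by the same layer-by-layer bookkeeping (each source contributing $r$ fresh vertices, each non-dummy bridge $r-4$, with edge-disjointness of sources and bridges justifying the additive edge count), and the saturation time is computed by tracing the process sequentially through layers via Proposition~\ref{httimepf}, with $h(i+1)$ steps per layer. The point you isolate as the substantive one---ruling out premature infections assembled across distinct sources---is handled in the paper at the same level of detail, namely by appealing to the fact that distinct sources and bridges pairwise intersect in at most two vertices and every edge lies in a single source or bridge.
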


\begin{proof}
Starting with the vertices, it shall be noted that every layer has $h$ sources that have $r$ of their vertices in this layer, plus $h$ bridges that add $r-4$ new vertices not intersecting any sources in this layer. This gives $v(L_h) = (hr + h(r-4))h + 2$, as $B_{h,h}$ contributes $r-2$ new vertices. 

Let us count the edges of $L_h$. Note that by construction, no pair of sources, no pair of bridges and no pair of bridge and source shares any common edges, which simplifies counting. In layer $i$, we have $h$ bridges contributing $\binom{r}{2}-2$ edges each. Furthermore, there are $h$ sources, which are isomorphic to $H_i - e$ and therefore have $\binom{r-1}{2} + i(r-2) - 1$ edges. Adding this up yields the claim.

To verify the asserted saturation time, note that sources and bridges never intersect in more than two vertices and there is not a single edge which is not element of some bridge or source. Furthermore, it is clear $S_{1,1}$ is the only 0-source, $B_{i,j}$ is activated by $S_{i,j}$, which is itself activated by $B_{i,j-1}$ (or by $B_{i-1,l}$ for $j=1$). Bridges are active for exactly one time step, sources $S_{i,j}$ percolate within their own vertex set after $i$ time steps by Proposition~\ref{httimepf}. Due to above observation about the number of vertices they intersect in other sources and bridges, during $S_{i,j}$'s active time, no other edges except interior ones become infected. 

Adding this up, in layer $i$, the bridges contribute $h$ steps in the percolation process, whereas the sources contribute $hi$ steps. This gives $\tau(L_h) = \sum_{i=1}^h (h+hi) = h^2 + h \sum_{i=1}^h i = \tfrac{1}{2} h^2 (h+3)$.
\end{proof}

\subsection{Edge minimality} \label{edgemin}

We shall come back to the insights gained in Section~\ref{Lfamily} in Section~\ref{exGT}. For now, we shall make a small detour and consider the question of edge minimality, i.e. the search for graphs which are edge-minimal subject to having given saturation time. Equivalently, of course, we can again ask to maximize the saturation time subject to a fixed number of edges. However, the second formulation is not as natural as in the vertex case, as the class of $m$-edge graphs is by far not as accessable as the class of $n$-vertex graphs.

It should first be noted that an extremal graph w.r.t. saturation time $t$ will have $n$ vertices, where $n$ satisfies $\sqrt{t} \leq n$. This comes from the trivial fact that the complete graph with less vertices does not have enough edges. Another trivial inequality is $\tau(G) \leq e(\bar G)$, where $\bar G$ is the complementary graph of $G$. Yet it seems harder give a bound on $\tau_{\max}^e(m)$, the edge-pendant to $\tau_{\max}$, and thus to relate $\tau(G)$ and $e(G)$ in a way similar to how the bound $\tau(G) \leq v(G)^2$ relates saturation time and number of vertices.


Sticking to the notation introduced for vertex-minimal graphs, we can make a couple of precise statements which closely follow the lines of the arguments of Section~\ref{saturationtimebound}. In particular, we get that $\mathcal H_t$ provides graphs which are also edge-minimal in the $K_4$ bootstrap percolation process.

\begin{theorem} \label{r4emin}
In the $K_4$ percolation process, we have that $e(G) \geq 2 \tau(G) + 6$. In other words, the graphs $\mathcal H_t$ are edge-minimal graphs satisfying $\tau_4(G)=t$.
\end{theorem}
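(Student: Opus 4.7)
I would mirror the single-source/multi-source decomposition of Theorem~\ref{r4vmin}, but count edges instead of vertices. The fundamental input is that every $0$-source $S$ of the $K_4$-process contains a $K_4-e$, so $e(S)\ge 5$ already when $v(S)=4$; more generally, since $S$ is a union of inclusion-maximal cliques pairwise sharing at least $r-2=2$ vertices and containing $\ge r=4$ vertices in total, one obtains $e(S)\ge\binom{v(S)}{2}-1$ for $v(S)=4$ and strictly more for $v(S)\ge 5$.

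\textbf{Step 1: edge analogue of Observation~\ref{onesource}.} I would first prove: if $G$ has exactly one source $S$, then
\al{e(G)\ \ge\ e(S)+2\bigl(v(G)-v(S)\bigr).}
The bound uses the minimum-degree inequality $\delta(G)\ge r-2=2$ from Observation~\ref{r-2conn}, applied to each of the $v(G)-v(S)$ vertices that sit outside $S$ and get absorbed by the expansion: each contributes at least two edges in $G$, and a bipartite incidence count between outside vertices and $\langle S\rangle$ avoids double-counting internal edges. Together with $\tau(G)\le v(G)-v(S)+1$ from Observation~\ref{onesource} this yields $e(G)\ge e(S)+2\tau(G)-2$.

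\textbf{Step 2: edge analogue of Lemma~\ref{vertexMSr4}.} I would then repeat the backward-analysis of that lemma to locate a source $S$ active in every time step, apply Step~1 to the expansion of $S$, and argue that each additional source or merger forces at least three edges not already counted. Concretely, if $S$ merges with $S'$ at some time $t$, then as in the vertex argument at least three new vertices enter $\langle S\rangle$ within two consecutive time steps, and by $\delta(G)\ge 2$ each of them contributes $\ge 2$ fresh edges (of which at least one is not absorbed into the internal count of Step~1). If $S$ never merges, then a disjoint source $S'$ with $v(S')\ge 4$, $e(S')\ge 5$ lives outside $\langle S\rangle$ entirely, giving the missing edges for free. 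Combining, one concludes the bound for arbitrarily many sources.

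\textbf{Main obstacle.} The delicate point is promoting the source contribution of Step~1 from $e(S)\ge 5$ (which on its own only furnishes $e(G)\ge 2\tau(G)+3$ for the extremal configuration $v(S)=4$) to the $+6$ constant demanded by the theorem. This gap has to be closed by a careful analysis of the very first activation step inside $\langle S\rangle$: when $v(S)=4$ the source becomes a full $K_4$ at time $1$, so the first external vertex it absorbs must in fact arrive along three edges rather than two, because the $K_4$-process forces a triangle in the common neighbourhood of the absorbed vertex and the old clique. Verifying that these extra edges are not already counted by either the degree argument of Step~1 or by the merger contribution of Step~2 is where the bulk of the case analysis lies, and is the precise edge-analogue of the merger/non-merger case split that produced the sharp $+1$ correction in the proof of Lemma~\ref{vertexMSr4}. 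Once this bookkeeping is in place, extremality of $\mathcal{H}_t$ falls out as in Theorem~\ref{r4vmin}.
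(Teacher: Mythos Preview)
Your Steps~1 and~2 mirror the paper's own argument precisely: the paper proves an edge analogue of Observation~\ref{onesource} (this is Observation~\ref{edge1s}, giving $e(G)\ge e(S)+(\tau(G)-1)(r-2)$ for a one-source graph) and an edge analogue of Lemma~\ref{vertexMSr4} (this is Lemma~\ref{edgeMSr4}), then plugs in $e(S)\ge\binom{r-1}{2}+(r-2)=5$ for $r=4$. So up to this point your outline and the paper's proof coincide, and both yield $e(G)\ge 2\tau(G)+3$.

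Your ``main obstacle'' is a phantom. The constant $+6$ in the theorem statement is a typo: for $r=4$ the family $\mathcal H_t$ has $e(H_t)=\binom{3}{2}+2t=2t+3$ (see the remark after the definition of $\mathcal H_t$), so already $H_1=K_4-e$ with $5$ edges and $\tau=1$ witnesses that no bound stronger than $e(G)\ge 2\tau(G)+3$ can hold. The paper's proof, carried out inside Lemma~\ref{edgeMSr4}, indeed stops at $+3$. Your attempt to squeeze out the extra $+3$ therefore cannot succeed, and in fact rests on a false claim: in the $K_4$-process an absorbed vertex needs only a $K_{r-2}=K_2$ in the common neighbourhood with a clique vertex, i.e.\ \emph{two} neighbours in the clique, not three. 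There is no ``triangle in the common neighbourhood'' requirement for $r=4$; that would be the $K_5$-process. Drop the entire ``main obstacle'' paragraph and your proposal is complete and matches the paper.
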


Firstly, let us observe behavior within the class of one-source graphs.

\begin{observation} \label{edge1s}
Let $G$ be a graph that has exactly one source $S$. Then $e(G) \geq e(S) + (\tau(G)-1)(r-2)$.
\end{observation}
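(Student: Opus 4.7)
The plan is to bound the edges of $G$ incident to vertices outside $V(S)$ by tracking, for each such vertex, the time it is first absorbed into the expansion of $S$ and the number of original $G$-edges its absorption consumes. Because $S$ is the only source, every activation at time $t$ must use a $K_{r-2}$ sitting inside $\langle S \rangle_{t-1}$: otherwise that culprit clique together with the two endpoints of the newly activated edge would constitute a second source in $G$, contradicting the hypothesis. This is exactly the structural fact that already powers Observation~\ref{onesource}.

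Now fix a vertex $v \notin V(S)$ that is eventually absorbed, and let $t(v)$ denote the first time $v$ lies in $\langle S \rangle_{t(v)}$. I would argue that $v$ has at least $r-2$ \emph{original} $G$-edges to $\langle S \rangle_{t(v)-1}$. Indeed, no edge incident to $v$ can have been activated at any time strictly before $t(v)$, since any such activation would have brought $v$ into the expansion earlier. Hence every $v$-incident edge present in $\langle G \rangle_{t(v)-1}$ is an original edge of $G$. The activation at time $t(v)$ that first touches $v$ is witnessed by a $K_{r-2}$ lying inside $\langle S \rangle_{t(v)-1}$ that is fully joined to $v$, supplying the required $r-2$ original edges.

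To finish, I would order the absorbed vertices $v_1,\ldots,v_N$ (with $N=v(G)-v(S)$) by $t(v_i)$, breaking ties arbitrarily, and credit each $v_i$ with $r-2$ edges of $G$ incident to $v_i$ and landing in $V(S)\cup\{v_j:t(v_j)<t(v_i)\}$. These credits are disjoint across the $v_i$ because each such edge has a unique later-absorbed endpoint and is charged only to that endpoint. Hence $e(G)-e(S)\geq N(r-2)$, and combining with the bound $v(G)-v(S)\geq \tau(G)-1$ from Observation~\ref{onesource} gives the claim. The one delicate point I expect to be the main obstacle is simultaneous absorptions: when several vertices enter the expansion at the same time step, one must avoid crediting lateral edges among them. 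This is resolved automatically, because the $K_{r-2}$ culprit for $v_i$'s absorption lies in $\langle S \rangle_{t(v_i)-1}$, which by definition excludes any $v_k$ with $t(v_k)=t(v_i)$, so the $r-2$ edges charged to $v_i$ all go strictly backward in the ordering.
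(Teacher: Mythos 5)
Your argument is correct and is essentially the paper's proof unrolled: the paper makes the same charge---each newly absorbed vertex must have at least $r-2$ original $G$-edges into the earlier expansion, since no edge at an unabsorbed vertex can yet have been activated---but organizes it as an induction on $\tau(G)$ (peeling off the last one or two time steps) rather than as your global charging scheme, relying in both cases on the structure established in Observation~\ref{onesource}. One small slip to fix: the number $N$ of absorbed vertices need not equal $v(G)-v(S)$ (some vertices may never be absorbed); what you actually need, and what the counting in the proof of Observation~\ref{onesource} delivers, is $N \geq \tau(G)-1$, after which your disjoint-charging bound $e(G)-e(S) \geq N(r-2)$ gives the claim.
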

\begin{proof}
The proof is very short and a combination of Observation~\ref{onesource} with induction on $t$. As the statement is clear for $\tau(G)=1$, let $\tau(G)=t>1$ and recall the observations we already made for one-source graphs: If at time $t-1$, only one vertex was added, then $\langle S \rangle_{t-1}$ is a clique and any vertex added at time $t$ needs to have at least $r-2$ neighbors in this clique. On the other hand, if we assume that at time $t-1$, at least two vertices were added to the expansion, then each of them must have had at least $r-2$ neighbors in $\langle S \rangle_{t-2}$, and as $\langle S \rangle_{t-2}$ itself is a one-source graph with saturation time $t-2$, we are again done.
\end{proof}

\begin{lemma} \label{edgeMSr4}
Let $G$ be a graph with at least two sources. If no source has inactive time, then there exists a source $S$ such that
\al{e(G) \geq (\tau(G)-1)(r-2) + e(S).}
\end{lemma}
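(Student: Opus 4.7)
The plan is to mirror the proof of Lemma~\ref{vertexMSr4}, swapping the vertex-counting argument for the edge-counting argument underlying Observation~\ref{edge1s}.

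First, I would invoke the same backwards analysis as in Lemma~\ref{vertexMSr4} to extract a source $S$ whose expansion is active at every time step $0, 1, \ldots, \tau(G)$. The construction is unchanged: fix any comprehensive merger tree $\mathcal T$, look at the latest merger occurring inside it, and observe that one of the two participating expansions must have been active at the time of merger; recursing on that expansion (which itself forms a comprehensive merger tree for the truncated process) eventually yields the desired always-active source $S \in \mathcal T$.

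Second, I would apply the spirit of Observation~\ref{edge1s} to $S$. Because $S$ is active at every step, at each $t \geq 1$ the expansion either absorbs fresh outside vertices---each carrying at least $r-2$ original edges of $G$ to the previous expansion, since until that moment the vertex is untouched and its adjacencies live in $G$ itself---or activates only interior edges, which forces the preceding step to have absorbed at least two fresh vertices and hence retroactively to have supplied $2(r-2)$ fresh edges. The identical induction on $\tau(G)$ used for Observation~\ref{edge1s} then accounts for at least $(\tau(G)-1)(r-2)$ edges of $G \setminus S$, and combining with the $e(S)$ edges of $S$ itself gives the claim.

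The main obstacle is handling merger steps cleanly. At a merger between $S$'s expansion and some other source $S'$'s at time $t$, many vertices of $S'$ enter $S$'s expansion simultaneously, and their neighbors in the previous expansion may lie along edges that were activated by $S'$'s process rather than along original edges of $G$. To salvage the bound, I would exploit the same observation used at the end of the proof of Lemma~\ref{vertexMSr4}: the intersection jumps from at most $r-3$ to at least $r$ vertices in a single step, so at least $r-2$ vertices of $S' \setminus S$ must enter, and they each bring $r-2$ original edges belonging to $S'$---disjoint from $e(S)$ because the sources overlap in very few vertices---into the expansion. Since the bound is phrased in terms of $S$ alone rather than all sources, no double-counting can arise, and as in the vertex version of the lemma, mergers in fact yield strict slack rather than a deficit, so the one-source-style bound transfers verbatim.
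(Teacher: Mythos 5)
Your proposal is correct and follows essentially the same route as the paper, whose own proof of this lemma is just the remark that it ``employs the exact same observations as Lemma~\ref{vertexMSr4}, combining them with Observation~\ref{edge1s}''---precisely the backwards analysis to extract an always-active source plus the per-step edge count. Your additional care about merger steps fills in detail the paper leaves implicit, and the resolution you give (mergers supply slack, not deficit) matches the spirit of the treatment in Lemmas~\ref{vertexMSr4} and~\ref{edgemultisource}.
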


\begin{proof}
The lemma employs the exact same observations as Lemma~\ref{vertexMSr4}, combining them with Observation~\ref{edge1s}. Furthermore, Theorem~\ref{r4emin} is a corollary, employing the fact that any source which is active for at least one time step must contain a $(r-1)$-clique (or a larger one) and a vertex with at least $r-2$ neighbors in that clique. Thus, $e(S) \geq \binom{r-1}{2} + r-2$, proving Theorem~\ref{r4emin}.
\end{proof}

The next lemma is the edge-analogue to Lemma~\ref{multisource}. 

\begin{lemma} \label{edgemultisource}
Let $G$ be a graph with at least two sources. If no source has inactive time, and if additionally, any merger involves only two merger trees (w.r.t. to the process up to that time), then for any largest comprehensive merger tree $\mathcal T$, we have
\al{e(G) \geq (\tau(G)-1)(r-2) + \sum\limits_{S \in \mathcal T} e(S) - (|\mathcal T|-1) \binom {r-1}{2}.}
\end{lemma}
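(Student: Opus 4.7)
The plan is to mirror the proof of Lemma~\ref{multisource}, replacing the vertex-counting Observation~\ref{onesource} by the edge-counting Observation~\ref{edge1s}, and to induct on $k := |\mathcal T|$. The base case $k = 1$ is exactly Lemma~\ref{edgeMSr4}, since the stated right-hand side collapses to $(\tau(G)-1)(r-2) + e(S)$ for the single source $S$.

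For the inductive step I would let $t^*$ be the first time a merger occurs inside $\mathcal T$ and write $(S_1,S_2), (S_3,S_4), \ldots, (S_{j-1},S_j)$ for the merging pairs, with some even $2 \leq j \leq k$. Setting $S_i^* := \langle S_i \rangle_{t^*}$ and $S_{2i-1,2i}^* := S_{2i-1}^* \cup S_{2i}^*$, the same reasoning used in Lemma~\ref{multisource} shows that
\al{\mathcal T^* := \bigcup_{i=1}^{j/2}\{S_{2i-1,2i}^*\} \cup \bigcup_{i=j+1}^{k}\{S_i^*\}}
is a largest comprehensive merger tree of size $k - j/2$ for the process started at time $t^*$. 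The induction hypothesis applied to the shifted process then yields
\al{e(G) \geq (\tau(G) - t^* - 1)(r-2) + \sum_{S' \in \mathcal T^*} e(S') - (|\mathcal T^*|-1)\binom{r-1}{2}.}

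The heart of the argument is a lower bound on $e(S_{2i-1,2i}^*)$ in terms of $e(S_{2i-1})$ and $e(S_{2i})$. Taking $(S_1, S_2)$ as representative and splitting infections before $t^*$ into outer ones (touching exactly one expansion) and inner ones (running between the two expansions) exactly as in Lemma~\ref{multisource}, the outer infections contribute at least $r-2$ fresh edges per active time step by the reasoning behind Observation~\ref{edge1s}, while inner infections may appear in both $e(S_1^*)$ and $e(S_2^*)$. Since the common vertex set of the two expansions stays strictly below $r-2$ prior to the merger step, the doubly counted edges must sit inside the body of a single source, giving an overcount of at most $\binom{r-1}{2}$. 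Combining this with $\Delta\tau^*(S_1) + \Delta\tau^*(S_2) \geq t^* + 1$, valid because no source has inactive time and at least one of the two is active throughout $[0,t^*]$, I expect to obtain
\al{e(S_{1,2}^*) \geq e(S_1) + e(S_2) + t^*(r-2) - \binom{r-1}{2},}
while the trivial $e(S_i^*) \geq e(S_i)$ suffices for each unmerged source with $i > j$.

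Plugging these estimates into the induction hypothesis, the $(j/2)\,t^*(r-2)$ contributions from the merging pairs combine with the $(\tau(G) - t^* - 1)(r-2)$ term into at least $(\tau(G) - 1)(r-2)$, the leftover $t^*((j/2)-1)(r-2)$ being nonnegative, while the $-(j/2)\binom{r-1}{2}$ corrections combine with $-(|\mathcal T^*| - 1)\binom{r-1}{2} = -(k - j/2 - 1)\binom{r-1}{2}$ into $-(k-1)\binom{r-1}{2}$, closing the induction. The main obstacle I foresee is the careful inner-infection accounting at the merger step: one has to verify that the edges lying in $E(S_1^*) \cap E(S_2^*)$ never exceed $\binom{r-1}{2}$ even when several inner infections fire simultaneously in the same step. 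Once that bound is pinned down, the remainder is an algebraic transcription of the argument in Lemma~\ref{multisource}, with $r-2$ in place of $1$ and $\binom{r-1}{2}$ in place of $r$.
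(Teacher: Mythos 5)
Your proposal is correct and follows essentially the same route as the paper: induction on $|\mathcal T|$ with base case from Lemma~\ref{edgeMSr4}, the shifted process at the first merger time $t^*$, and the key estimate $e(S_{1,2}^*) \geq e(S_1) + e(S_2) + t^*(r-2) - \binom{r-1}{2}$, which the paper establishes by the same inner/outer-infection double-counting you outline (its explicit computation shows the overcount is in fact at most $\binom{r-2}{2} \leq \binom{r-1}{2}$, confirming the bound you flagged as the delicate step).
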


\begin{proof}
Let $\mathcal T$ be a largest protracted comprehensive merger tree with $|\mathcal T| =k$. Again, we use induction on $k$ and note that in the base case $k=1$, no merger occurs. Again, there has to be at least one source $S$ whose expansions is active for $\tau(G)$ time steps, which gives the result with $\mathcal T = \{S\}$. Let thus $k>1$ and let $\mathcal T = \{S_1, \ldots, S_k \}$ be a largest protracted comprehensive merger tree. Denote by $t^*$ the time of the first merger within $\mathcal T$ with $(S_1, S_2), \ldots, (S_{j-1},S_j)$ the pairs of $t^*$-merging sources. As was the case for vertex-minimality, once we appropriately bound $S_{1,2}^*$, we are done. 

To that end, observe that up to time $t^*$, at least one of the two sources $S_1, S_2$ is permanently active, $S_1$ say. With Observation~\ref{edge1s}, we see that every time a new vertex $v$ is added, it must have had $r-2$ neighbors in the expansion of $S_1$. Since our goal is to express $e(S_{1,2}^*)$ in terms of $e(S_1)$ and $e(S_2)$, the only way we can double count edges is if at least one of these $r-2$ edges from $v$ to the expansion of $S_1$ is an interior edge of the expansion of $S_2$ at the respective time.

So assume $l$ of the $r-2$ edges between $v$ and the expansion of $S_1$ are $S_2$-interior at this time. This means two things: Firstly, $v$ is already an element of the expansion of $S_2$ and hence joins the intersection of $S_1$ and $S_2$ by joining the expansion of $S_1$; secondly $S_2$ and $S_1$ (or rather, their expansions) must have already intersected in $l$ vertices. If $I_0 := v(S_1 \cap S_2)$, then we double count $\sum_{i=I_0}^{r-3} i$ edges this way. 

Adding this up, we get
\begin{align} \label{eq:edgek2}
\begin{aligned}
e(S_{1,2})^* & \geq e(S_1) + e(S_2) - \binom {I_0}{2} + (r-2)t^* - \sum\limits_{i=I_0}^{r-3} i \\
 & = e(S_1) + e(S_2) + t^*(r-2) - \binom {I_0}{2} - \tfrac{1}{2}(r-2)(r-3) + \tfrac{1}{2}(I_0+1)I_0 \\
& \geq e(S_1) + e(S_2) + t^*(r-2) - \tfrac{1}{2}(r-2)(r-3) \\
& \geq e(S_1) + e(S_2) + t^*(r-2) - \binom{r-1}{2}.
\end{aligned}
\end{align}

We apply our induction hypothesis to the process started at time $t^*$ to $\mathcal T^* = S_{1,2}^* \cup \ldots \cup S_{j-1},S_j \cup \{S_{j+1}^*, \ldots, S_k^*\}$, which is a protracted largest comprehensive merger tree for the process started at time $t^*$, and obtain
\al{e(G) & \geq (\tau(G)-t^*-1)(r-2) + \sum\limits_{S \in \mathcal T^*} e(S) - (|\mathcal T^*|-1) \binom {r-1}{2}. \\
 & \geq (\tau(G)-t^*-1)(r-2) + \sum\limits_{i=1}^{j/2} e(S_{2i-1,2i}^*) + \sum\limits_{i=j+1}^k e(S_i^*) - (k-j-1) \binom {r-1}{2} \\
 & \geq (\tau(G)-1)(r-2) + \sum\limits_{i=1}^k e(S_i) - (k-1) \binom{r-1}{2}.}
This was the claim and finishes the proof.
\end{proof}

\subsection[Further discussion of $\mathcal L_h$]{Further discussion of \boldmath{$\mathcal L_h$}} \label{exGT}
Returning to vertex minimality, we may ask: Why is this family of graphs $\mathcal L_h$ of interest to us? In Section~\ref{saturationtimebound}, we saw that in the $K_4$-percolation process, a graph on $\Theta(n)$ vertices is saturated after $\mathcal O(n)$ time steps. Yet, for $r \geq 5$, the family  $\mathcal L_h$ shows that there exist values of $n$ and graphs on $\Theta(n)$ vertices which are saturated after $\Omega(n^{3/2})$ time steps.

It should be noted that family $\mathcal L_h$ is far from being vertex-optimal w.r.t. its saturation time. As already mentioned, bridges are added into the construction just for clarification. Even more, for some source $S_{i,j}$, instead of intersecting in each above layer one source in exactly one vertex, this number could be increased up to $r-3$, thus delaying the saturation time with every extra such intersection.

One might wonder if there are graphs which outperform $\mathcal L_h$ significantly, i.e. graphs $G$ with $v(G)=n$ and $\tau(G) = f(n)$, where $f(n) = \omega(n^{3/2})$. The most direct approach would be to improve the construction which the graphs $\mathcal L_h$ are based on. To see that this is not possible when sticking to the construction too strictly, observe that we can reduce some $\mathcal L_h$ family member $L_h$ as follows: Let $R(L_h)$ be the reduced graph whose vertex set corresponds to the set of sources (not including bridges) in $L_h$ and where two vertices are adjacent iff the two sources intersect in $L_h$.

\begin{figure}[H]
\centering
\includegraphics{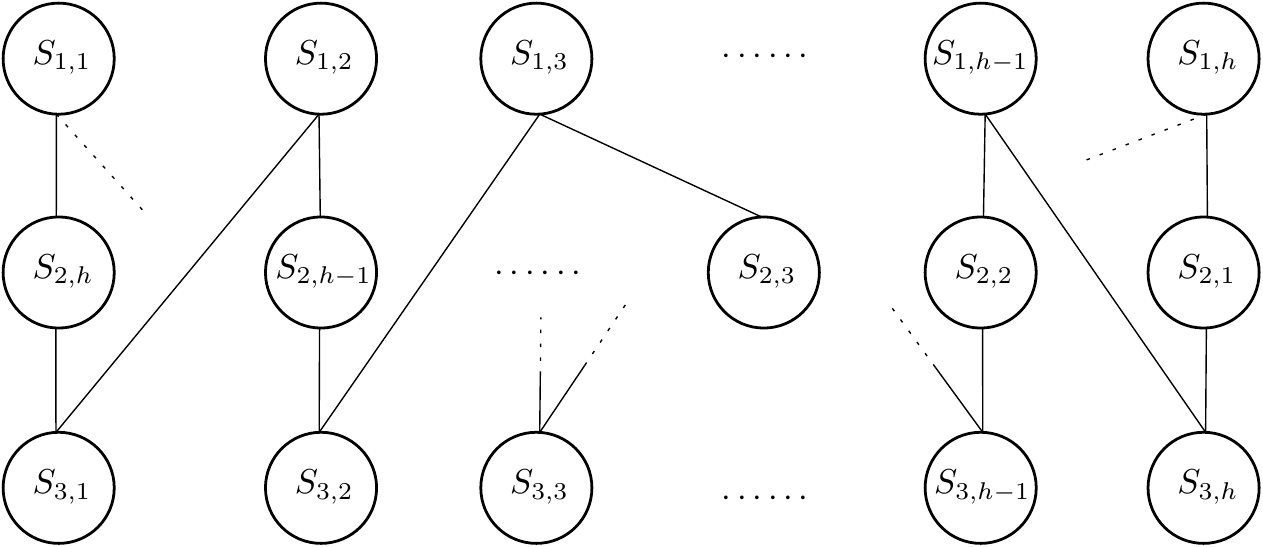}
\caption{The reduced graph of the $\mathcal L_3$ member depicted in Figure~\ref{fig:tpercL3}}
\label{fig:rlh}
\end{figure}
\vspace{0.5 cm}

We have $v(R(L_h)) = \Theta(h^2) = \Theta(v(L_h))$ and $\tau(L_h)$ is $e(R(L_h)) + v(R(L_h))$ plus roughly the number of bridges. The second two terms are of smaller order and so $\tau(L_h) = \Theta(e(R(L_h))$. This reduced graph allows us to transform the problem from one where we asked about the number of possible intersections between sources with a restriction on the number of mutual intersections of two sources into a manner which is now the domain of Extremal Graph Theory: How many edges can $R(L_h)$ have subject to a certain subgraph being forbidden? For an introduction to this field in general, see \cite{bollobasEGT}.

As we do not want any two vertices to have a common neighborhood of size larger than one or two neighboring vertices to have any mutual adjacent third vertices at all, what we actually do here is to forbid a $K_{2,2}=C_4$ as well as a $C_3=K_3$. Note that we do not actually need the grid and layer structure of $L_h$, and in fact a wider class of reduced graphs are blueprint to some graph of interest to us in the bootstrap percolation sense. Roughly speaking, once given a graph that does not contain a copy of $C_4$ and $C_3$, we can fix an ordering of the vertices (our respective sources) and even insert bridges. With respect to that ordering, every source $S$ has an up-degree $k(S)$ and hence we want $S \simeq H_k-e$. See subsection~\ref{openprob} for this.

So, to repeat this, we are looking for the number $ex(n,C_4)$ to determine how large $\tau(G)$ can be for some $G$ with $v(G)=n$. A bound on this comes from the following result.

\begin{theorem}[K\'{o}vari, S\'os, Tur\'an \cite{Kovari}] \label{exKst}
Let $K_{a,b}$ denote the comlete bipartite graph with $a \leq b$ vertices in its two respective color classes. Then
\al{ex(n,K_{a,b}) \leq \frac{1}{2} \sqrt[a]{b-1} \cdot n^{2-1/a} + \mathcal O(n).}
\end{theorem}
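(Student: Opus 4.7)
The plan is to use the standard double counting argument of Kővári, Sós, and Turán. Let $G$ be a $K_{a,b}$-free graph on $n$ vertices with $m = e(G)$ edges, and count in two ways the number $N$ of pairs $(v, T)$ where $v \in V(G)$ and $T$ is an $a$-element subset of $N(v)$.

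First I would establish the upper bound on $N$ via the forbidden subgraph. For any fixed $a$-subset $T \subseteq V(G)$, the number of vertices $v$ having $T \subseteq N(v)$ is at most $b-1$: otherwise, $T$ together with $b$ such common neighbors would produce a copy of $K_{a,b}$ in $G$ (with $T$ on one side and the common neighbors on the other). Summing over the $\binom{n}{a}$ choices of $T$ gives
\al{N = \sum_{T \in \binom{V(G)}{a}} |\{v : T \subseteq N(v)\}| \leq (b-1)\binom{n}{a}.}

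Next I would establish the lower bound on $N$ via convexity. Counting by $v$ first, $N = \sum_v \binom{d(v)}{a}$. Extending $\binom{x}{a} = x(x-1)\cdots(x-a+1)/a!$ as a convex function on $x \geq a-1$ and applying Jensen's inequality to the average degree $\bar d = 2m/n$,
\al{N = \sum_v \binom{d(v)}{a} \geq n \binom{\bar d}{a} = n \binom{2m/n}{a},}
valid whenever $2m/n \geq a-1$ (the other case contributes only to the $\mathcal O(n)$ error and can be handled separately).

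Combining both bounds yields $n \binom{2m/n}{a} \leq (b-1)\binom{n}{a}$. Using $\binom{x}{a} \geq (x-a+1)^a/a!$ on the left and $\binom{n}{a} \leq n^a/a!$ on the right, this simplifies to $(2m/n - a + 1)^a \leq (b-1) n^{a-1}$. Taking $a$-th roots and rearranging gives
\al{m \leq \tfrac{1}{2}\sqrt[a]{b-1}\cdot n^{2-1/a} + \tfrac{a-1}{2}n,}
which is the claimed inequality with an explicit $\mathcal O(n)$ error term. The main technical nuisance is checking convexity and the degenerate regime $2m/n < a-1$; but in that regime $m < (a-1)n/2$ is already within the $\mathcal O(n)$ slack, so no real obstacle arises.
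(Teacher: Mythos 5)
Your double-counting argument is the standard K\H{o}v\'ari--S\'os--Tur\'an proof and is correct, including the convexity step and the handling of the degenerate regime $2m/n < a-1$. The paper itself does not prove this statement---it is quoted as a known theorem from the cited source---so there is nothing to compare against beyond noting that your argument is exactly the classical one.
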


A survey on this topic was published by F\"uredi and Simonovits \cite{Furedisurvey}. For the case we are interested in, there is another statement due to F\"uredi, which is more precise.
\begin{theorem}[F\"uredi \cite{Furedi}] \label{exK2t+1}
For any fixed $t \geq 1$, we have
\al{ex(n,K_{2,t+1}) = \frac{1}{2} \sqrt{t} n^{3/2} + \mathcal O(n^{4/3}).}
\end{theorem}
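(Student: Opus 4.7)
The plan is to establish both a matching upper and lower bound, with the main work lying in the construction for the lower bound.

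For the upper bound, I would apply the standard K\H{o}v\'ari--S\'os--Tur\'an double-counting tailored to $K_{2,t+1}$. Let $G$ be a $K_{2,t+1}$-free graph on $n$ vertices with $m$ edges. Counting triples $(v,\{x,y\})$ in which $v$ is a common neighbor of both $x$ and $y$ yields $\sum_v \binom{d(v)}{2}$ on the one hand, while on the other hand, no pair $\{x,y\}$ can contribute more than $t$ such triples (otherwise a $K_{2,t+1}$ would appear). Hence $\sum_v \binom{d(v)}{2} \leq t \binom{n}{2}$. Cauchy--Schwarz gives $\sum_v d(v)^2 \geq (2m)^2/n$, so
\al{\frac{2m^2}{n} - m \leq \tfrac{1}{2} t n(n-1),}
and solving the resulting quadratic in $m$ gives the claimed leading term $\tfrac{1}{2}\sqrt{t}\,n^{3/2}$ with an error term of $\lo(n)$, comfortably within the $\lo(n^{4/3})$ bound.

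For the matching lower bound, I would construct an explicit $K_{2,t+1}$-free graph with the correct leading constant via finite geometry. The prototype is the $t=1$ case, where the Erd\H{o}s--R\'enyi polarity graph of $PG(2,q)$ is $C_4$-free with $\tfrac{1}{2}(1+o(1))q^3$ edges on $n=q^2+q+1$ vertices, matching $\tfrac{1}{2}n^{3/2}$. For general $t$, the idea would be to take $q$ a prime power with $q \approx \sqrt{n}$, work over a suitable incidence structure on $\mathbb{F}_q^d$ (for example, curves or hypersurfaces defined by a polynomial equation designed so that any two points lie on at most $t$ common objects), and let the graph encode point--object incidences (or an appropriate quotient thereof). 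By choosing the algebraic object so that almost every pair of vertices has \emph{exactly} $t$ common neighbors, one recovers the sharp constant $\tfrac{1}{2}\sqrt{t}$ in the leading term.

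The main obstacle is the construction's precision: one needs a finite-geometric object in which pair-intersections are tightly controlled, and simultaneously one must interpolate between the sparse sequence of prime-power parameters $q$ and arbitrary $n$. The interpolation step is where the $\lo(n^{4/3})$ error term originates, since consecutive prime powers are at distance $\lo(q^{2/3})=\lo(n^{1/3})$ apart, and padding a smaller extremal graph with isolated vertices or gluing in a blow-up loses $\lo(n \cdot n^{1/3}) = \lo(n^{4/3})$ edges. Verifying that the algebraic construction is genuinely $K_{2,t+1}$-free (as opposed to merely avoiding it on average) is a delicate algebraic computation, and aligning the arithmetic so that the constant is exactly $\tfrac{1}{2}\sqrt{t}$ rather than a worse constant is the technical heart of F\"uredi's argument.
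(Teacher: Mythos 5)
The paper offers no proof of this statement: it is imported verbatim from F\"uredi \cite{Furedi} and used purely as a black box in Subsection~\ref{exGT} to argue that the reduced-graph approach cannot beat $n^{3/2}$. So the only question is whether your outline would actually yield the theorem. Your upper-bound half is fine and essentially complete: counting cherries gives $\sum_v \binom{d(v)}{2} \leq t\binom{n}{2}$, convexity gives $m \leq \tfrac{1}{2}\sqrt{t}\,n^{3/2} + \lo(n)$, and this error is absorbed by $\lo(n^{4/3})$.

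The genuine gap is in the lower bound, which is where all the content of the theorem sits. Your paragraph describes the \emph{desired properties} of the construction (``an incidence structure in which any two points lie on at most $t$ common objects, with almost every pair having exactly $t$ common neighbours'') but never exhibits such an object, and for general $t$ this cannot be waved at: the Erd\H{o}s--R\'enyi polarity graph only handles $t=1$. The construction F\"uredi actually uses is the following quotient: for a prime power $q$ with $t \mid q-1$, let $H \leq \mathbb F_q^{\ast}$ be the subgroup of order $t$, take as vertices the $(q^2-1)/t$ orbits of $\mathbb F_q^2\setminus\{(0,0)\}$ under the diagonal $H$-action, and join $\langle a,b\rangle$ to $\langle x,y\rangle$ iff $ax+by\in H$. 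Two vertices in ``general position'' then have exactly $t$ common neighbours (the $t^2$ solutions of the pair of linear systems $ax+by=h$, $a'x+b'y=h'$ over $(h,h')\in H\times H$ fall into $t$ orbits of size $t$), the graph is essentially $q$-regular, and $n=(q^2-1)/t$ gives $\tfrac{1}{2} qn = \tfrac{1}{2}\sqrt{t}\, n^{3/2}(1+o(1))$ edges. Without this (or an equivalent) explicit construction --- including the divisibility condition $t\mid q-1$, which your sketch does not anticipate and which forces an extra arithmetic-progression-of-primes step in the interpolation --- the proposal does not establish the lower bound; the padding argument over prime gaps that you do describe is the easy part and only matters once the construction exists.
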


Recalling that $ex(n,C_3) = n^2/4$, both these above theorems immediately tell us that, even though there are graphs who slightly do better than $\mathcal L_h$ and are a slight modification, asymptotically, we cannot do any better with a construction that is based on a reduced graph like $R(L_h)$. Most importantly, it does not make a difference asymptotically if we allow some source $S_{i,j}$ to intersect other sources in one vertex or up to $r-3$ vertices---we always get a reduced graph where we forbid a copy of $K_{2,t}$ with $t \leq r-4$ and thus Theorem~\ref{exK2t+1} applies.

\subsection{Two open problems} \label{openprob}
The natural open question in this section is the determination of precise asymptotics for $\tau_{\max}(n)$.  Bollob\'as et al. \cite{sattimebol} asymptotically closed this gap (for $r \to \infty$). However, it is still not clear if $\tau_{\max}(n,r) = o(n^2)$ for $r \geq 5$.

For small $r$, and especially for $r=5$, the gap is rather large. The way in which the graphs of $\mathcal L_h$ were constructed turned out to be limited to $n^{3/2}$, but there might very well be another construction which does better. To summarize, we know that
\al{c n^{3/2} \leq \tau_{\max}(n,5) \leq n^2,}
for some constant $c$. It turns out that pushing the lower bound with explicit constructions is related to a problem which is native to Extremal Set Theory. Borrowing notation from this field, let $L$ be a set of non-negative integers and say that a family $\mathcal F$ of subsets of $[n]$ is \emph{$L$-intersecting}, if the size of the intersection of any two members of $\mathcal F$ lies in $L$. Consider now the following problem.

\begin{problem}
Find an $L$-intersecting collection $\mathcal F \subset 2^{[n]}$ with $L=\{0, \ldots, r-3\}$ and an order $F_1, F_2, \ldots$ of the elements of $\mathcal F$ satisfying 
\begin{align} \label{updegcond}
|F_i \backslash \bigcup_{j<i} F_j | \geq r \quad \forall  i \in [|\mathcal F|]
\end{align}
such that the number of intersections is maximized, i.e. $\sum_{F, F' \in \binom {\mathcal F}{2}} |F \cap F'|$ is maximal within all collections satisfying \eqref{updegcond}.
\end{problem}

There is a lot of literature to be found on intersecting sets and $L$-intersecting sets. Unfortunately, usually the task is to maximize $|\mathcal F|$, not maximize the number of intersections. On top of that, condition~\eqref{updegcond} seems to be too unnatural to have appeared somewhere in the literature.

It is not hard to see how to draw such a family from a member of $\mathcal L_h$, and, on the other hand, it is also straightforward how such a family gives rise to a graph which might be of interest to us in the bootstrap percolation process. Indeed, this graph should have vertices $[n]$ and we identify the elements of $\mathcal F$ as sources, i.e. $V(S_i) = F_i$, and we want $S_i$ to be isomorphic to $H_{v(S_i)-r+1}-e$. The sources are active in the order which is given by $\mathcal F$, and we might again interpose bridges. Hence $S_1$ is the only $0$-source and $S_j$ activates a bridge which activates $S_{j+1}$. At every point in time, when some source $S_j$ gets activated, we can arrange the graph in a way so that the body and its $v_1$ vertex do not intersect any sources of index less than $j$ and so we can run the infection process as we did in Section~\ref{Lfamily}.

However, it is yet to be examined whether an upper bound on the number of intersections in above problem also implies an upper bound on $\tau_{\max}(n)$.

A second open problem concerns Section~\ref{edgemin} or, to be more precise, the value of $\tau_{\max}^e(m)$ for $r \geq 5$. Both families $\mathcal H_t$ and $\mathcal L_h$ have a saturation time which is of the same order as the number of edges, which leaves us with the (rather large) gap
\al{\Theta(m) \leq \tau_{\max}^e(m) \leq \mathcal O(m^2)}
to be closed.

\bibliography{bibliography}{}

\providecommand{\bysame}{\leavevmode\hbox to3em{\hrulefill}\thinspace}
\providecommand{\MR}{\relax\ifhmode\unskip\space\fi MR }
\providecommand{\MRhref}[2]{%
  \href{http://www.ams.org/mathscinet-getitem?mr=#1}{#2}
}
\providecommand{\href}[2]{#2}
\begin{thebibliography}{10}

\bibitem{adler}
Joan Adler and Uri Lev, \emph{{Bootstrap percolation: visualizations and
  applications}}, {Brazilian Journal of Physics} \textbf{33} (2003), 641 -- 644
  (en).

\bibitem{aizenman}
Michael Aizenman and Joel~L. Lebowitz, \emph{Metastability effects in bootstrap
  percolation}, Journal of Physics A: Mathematical and General \textbf{21}
  (1988), no.~19, 3801.

\bibitem{alon}
Noga Alon, \emph{An extremal problem for sets with applications to graph
  theory}, Journal of Combinatorial Theory, Series A \textbf{40} (1985), no.~1,
  82 -- 89.

\bibitem{aminipowerlaw}
Hamed Amini and Nikolaos Fountoulakis, \emph{Bootstrap percolation in power-law
  random graphs}, Journal of Statistical Physics \textbf{155} (2014), no.~1,
  72--92.

\bibitem{baloghhypercube}
J{\'o}zsef Balogh and B{\'e}la Bollob{\'a}s, \emph{Bootstrap percolation on the
  hypercube}, Probability Theory and Related Fields \textbf{134} (2006), no.~4,
  624--648 (English).

\bibitem{baloghalldim}
J{\'o}zsef Balogh, B{\'e}la Bollob{\'a}s, Hugo Duminil-Copin, and Robert
  Morris, \emph{The sharp threshold for bootstrap percolation in all
  dimensions}, Transactions of the American Mathematical Society \textbf{364}
  (2012), no.~5, 2667--2701.

\bibitem{baloghthreedim}
J{\'o}zsef Balogh, B{\'e}la Bollob{\'a}s, and Robert Morris, \emph{Bootstrap
  percolation in three dimensions}, Ann. Probab. \textbf{37} (2009), no.~4,
  1329--1380.

\bibitem{hypercubemajority}
\bysame, \emph{Majority bootstrap percolation on the hypercube}, Comb. Probab.
  Comput. \textbf{18} (2009), no.~1-2, 17--51.

\bibitem{baloghhighdim}
\bysame, \emph{Bootstrap percolation in high dimensions}, Combinatorics,
  Probability and Computing \textbf{19} (2010), 643--692.

\bibitem{gbp1}
\bysame, \emph{Graph bootstrap percolation}, {Random Structures \& Algorithms}
  \textbf{41} (2012), no.~4, 413--440.

\bibitem{baloghlinalg}
J{\'o}zsef Balogh, B{\'e}la Bollob{\'a}s, Robert Morris, and Oliver Riordan,
  \emph{Linear algebra and bootstrap percolation}, Journal of Combinatorial
  Theory, Series A \textbf{119} (2012), no.~6, 1328 -- 1335.

\bibitem{baloghinftrees}
J\'{o}zsef Balogh, Yuval Peres, and G\'{a}bor Pete, \emph{Bootstrap percolation
  on infinite trees and non-amenable groups}, Comb. Probab. Comput. \textbf{15}
  (2006), no.~5, 715--730.

\bibitem{baloghrandomregular}
J\'{o}zsef Balogh and Boris~G. Pittel, \emph{Bootstrap percolation on the
  random regular graph}, {Random Structures \& Algorithms} \textbf{30} (2007),
  no.~1-2, 257--286.

\bibitem{benevmaxtime2d}
Fabricio Benevides and {Micha\l } Przykucki, \emph{Maximum percolation time in
  two-dimensional bootstrap percolation}, SIAM Journal on Discrete Mathematics
  \textbf{29} (2015), no.~1, 224--251.

\bibitem{biskupregulartrees}
Marek Biskup and Roberto~H. Schonmann, \emph{Metastable behavior for bootstrap
  percolation on regular trees}, Journal of Statistical Physics \textbf{136}
  (2009), no.~4, 667--676 (English).

\bibitem{bol68}
B{\'e}la Bollob\'{a}s, \emph{{Weakly $k$-saturated graphs}}, Beitr\"{a}ge zur
  Graphentheorie (Horst Sachs, ed.), Leipzig: Teubner, 1968, pp.~25--31.

\bibitem{bollobasEGT}
\bysame, \emph{Extremal graph theory}, Dover Publications, Incorporated, 2004.

\bibitem{galtonwatson}
B{\'e}la Bollob{\'a}s, Karen Gunderson, Cecilia Holmgren, Svante Janson, and
  {Micha\l } Przykucki, \emph{{Bootstrap percolation on Galton-Watson trees}},
  Electron. J. Probab. \textbf{19} (2014), no. 13, 1--27.

\bibitem{bollobastimedense}
B\'{e}la Bollob\'{a}s, Cecilia Holmgren, Paul Smith, and Andrew~J. Uzzell,
  \emph{The time of bootstrap percolation with dense initial sets}, Ann.
  Probab. \textbf{42} (2014), no.~4, 1337--1373.

\bibitem{sattimebol}
B\'ela {Bollob{\'a}s}, Micha\l {Przykucki}, Oliver {Riordan}, and Julian
  {Sahasrabudhe}, \emph{{On the maximum running time in graph bootstrap
  percolation}}, ArXiv e-prints (2015).

\bibitem{cerf}
Rapha\"{e}l Cerf and Franzesco Manzo, \emph{The threshold regime of finite
  volume bootstrap percolation}, Stochastic Processes and their Applications
  \textbf{101} (2002), no.~1, 69 -- 82.

\bibitem{cerf1999}
Raphaël Cerf and Emilio N.~M. Cirillo, \emph{Finite size scaling in
  three-dimensional bootstrap percolation}, Ann. Probab. \textbf{27} (1999),
  no.~4, 1837--1850.

\bibitem{chalupa}
John Chalupa, Paul~L. Leath, and Gary~R. Reich, \emph{{Bootstrap percolation on
  a Bethe lattice}}, Journal of Physics C: Solid State Physics \textbf{12}
  (1979), no.~1, L31--L35.

\bibitem{enterz2}
{Aernout C.D. van} Enter, \emph{{Proof of Straley's argument for bootstrap
  percolation}}, Journal of Statistical Physics \textbf{48} (1987), no.~3-4,
  943--945 (English).

\bibitem{fonteshomotrees}
Luiz~R.G. Fontes and Roberto~H. Schonmann, \emph{Bootstrap percolation on
  homogeneous trees has 2 phase transitions}, Journal of Statistical Physics
  \textbf{132} (2008), no.~5, 839--861 (English).

\bibitem{frankl82}
P\'eter Frankl, \emph{An extremal problem for two families of sets}, European
  Journal of Combinatorics \textbf{3} (1982), no.~2, 125 -- 127.

\bibitem{Furedi}
Zolt\'an F\"uredi, \emph{{New Asymptotics for Bipartite Tur\'an Numbers}},
  Journal of Combinatorial Theory, Series A \textbf{75} (1996), no.~1, 141 --
  144.

\bibitem{Furedisurvey}
Zolt\'an F\"uredi and Mikl\'os Simonovits, \emph{The history of degenerate
  (bipartite) extremal graph problems}, Erdős Centennial (L\'aszl\'o Lov\'asz,
  Imre~Z. Ruzsa, and Vera~T. S\'os, eds.), Bolyai Society Mathematical Studies,
  vol.~25, Springer Berlin Heidelberg, 2013, pp.~169--264 (English).

\bibitem{holroydsharper}
Janko Gravner, Alexander~E. Holroyd, and Robert Morris, \emph{A sharper
  threshold for bootstrap percolation in two dimensions}, Probability Theory
  and Related Fields \textbf{153} (2012), no.~1-2, 1--23 (English).

\bibitem{gbptime}
Karen {Gunderson}, Sebastian {Koch}, and {Micha\l } {Przykucki}, \emph{{The
  time of graph bootstrap percolation}}, ArXiv e-prints (2015).

\bibitem{holroyd}
Alexander~E. Holroyd, \emph{Sharp metastability threshold for two-dimensional
  bootstrap percolation}, Probability Theory and Related Fields \textbf{125}
  (2003), no.~2, 195--224 (English).

\bibitem{bprandomgraph}
Svante Janson, Tomasz {\L}uczak, Tatyana Turova, and Thomas Vallier,
  \emph{{Bootstrap percolation on the random graph $G_{n,p}$}}, Ann. Appl.
  Probab. \textbf{22} (2012), no.~5, 1989--2047.

\bibitem{kalai84}
Gil Kalai, \emph{Weakly saturated graphs are rigid}, Annals of Discrete
  Mathematics (20) (M.~Rosenfeld and J.~Zaks, eds.), vol.~87, 1984, pp.~189 --
  190.

\bibitem{Kovari}
Tam\'as K\'ovari, Vera S\'os, and P\'al Tur\'an, \emph{{On a problem of K.
  Zarankiewicz}}, Colloquium Mathematicae \textbf{3} (1954), no.~1, 50--57
  (eng).

\bibitem{morris07}
Robert {Morris}, \emph{{Minimal percolating sets in bootstrap percolation}},
  ArXiv Mathematics e-prints (2007).

\bibitem{mornoel}
Natasha {Morrison} and Jonathan~A. {Noel}, \emph{{Extremal Bounds for Bootstrap
  Percolation in the Hypercube}}, ArXiv e-prints (2015).

\bibitem{scottmorno}
Natasha {Morrison}, Jonathan~A. {Noel}, and Alex {Scott}, \emph{{Saturation in
  the Hypercube and Bootstrap Percolation}}, ArXiv e-prints (2014).

\bibitem{vonNeumann}
{John von} Neumann, \emph{Theory of self-reproducing automata}, University of
  Illinois Press, Champaign, IL, USA, 1966.

\bibitem{przy12}
Micha\l Przykucki, \emph{Maximal percolation time in hypercubes under
  2-bootstrap percolation}, 2012.

\bibitem{schonmannz2}
Roberto~H. Schonmann, \emph{On the behavior of some cellular automata related
  to bootstrap percolation}, Ann. Probab. \textbf{20} (1992), no.~1, 174--193.

\bibitem{ulam}
Stanislaw Ulam, \emph{Random processes and transformations}, Proceedings of the
  International Congress on Mathematics, 1950, pp.~264--275.

\bibitem{watts}
Duncan~J. Watts, \emph{A simple model of global cascades on random networks},
  Proceedings of the National Academy of Sciences \textbf{99} (2002), no.~9,
  5766--5771.

\end{thebibliography}
\bibliographystyle{amsplain}

\end{document}